\newcommand{\Kl}[1]{\left( #1 \right)}
\newcommand{\Z}{{\mathbb Z}}
\newcommand{\N}{{\mathbb N}}
\newcommand{\Q}{{\mathbb Q}}
\newcommand{\R}{{\mathbb R}}
\newcommand{\Ha}{{\mathbb H}}
\newcommand{\eps}{{\epsilon}}
\newcommand{\RN}[1]{\uppercase\expandafter{\romannumeral#1}}
\newtheorem{thm}{Theorem}
\newtheorem{lemma}[thm]{Lemma}
\newtheorem{kor}[thm]{Corollary}
\theoremstyle{definition}
\newcommand{\secret}[1]{}
\newcommand{\pdiv}{\mid\!\mid}
\DeclarePairedDelimiter\abs{\lvert}{\rvert}%
\DeclarePairedDelimiter\norm{\lVert}{\rVert}%
\let\oldabs\abs
\def\abs{\@ifstar{\oldabs}{\oldabs*}}
\let\oldnorm\norm
\def\norm{\@ifstar{\oldnorm}{\oldnorm*}}
\renewcommand*\env@cases[1][1.2]{%
	\let\@ifnextchar\new@ifnextchar
	\left\lbrace
	\def\arraystretch{#1}%
	\array{@{}l@{\quad}l@{}}%
}
\title{Uniform bounds for norms of theta series and arithmetic applications}
\author{Fabian Waibel}
\begin{document}

 \maketitle
 
 \vspace{-0.6cm}
 
  \begin{abstract} \noindent	We prove uniform bounds for the Petersson norm of the cuspidal part of the theta series. This gives an improved asymptotic formula for  the number of representations by a quadratic form. As an application, we show that every integer $n \neq 0,4,7 \,(\operatorname{mod}8)$ is represented as $n= x_1^2 + x_2^2 + x_3^3$ for integers $x_1,x_2,x_3$ such that the product $x_1x_2x_3$ has at most 72 prime divisors. 
\end{abstract}

\renewcommand{\thefootnote}{\fnsymbol{footnote}} 
\footnotetext{\emph{2010 Mathematics Subject Classification}. Primary 11E25, 11F30, 11P05, 11N37\\
	\emph{Key words and phrases.} Theta series, quadratic forms, almost primes, smooth numbers} 
\renewcommand{\thefootnote}{\arabic{footnote}}

\section{Introduction} 

\subsection{Quadratic forms and theta series} A positive integral $m{\times}m$ matrix $Q$ with even diagonal entries gives rise to a quadratic form  $q(x)= \frac{1}{2} x^T Q x$. It is one of the classical tasks of number theory to study which numbers $n$ are represented by $q$ or more precisely to count the number of solutions
\begin{align*}
r(Q,n) := \# \{ x \in \Z^{m} \, | \, \tfrac{1}{2} x^T Q x= n \}. 
\end{align*}
Naturally, $r(Q,n) \ge 1$ only holds if $n$ is locally presented by $q$ meaning that for every prime $p$ there exists a $p$-adic solution $x_p \in (\Z_p)^m$ of $n= q(x_p)$. By applying the Hardy-Littlewood method, Tartakowski \cite{Ta29} showed in 1929 that for $m \geq 5$  every locally represented, sufficiently large $n$ satisfies $r(Q,n) \ge 1$. Furthermore, he extended this result to $m=4$ by assuming that $n$ is primitively locally represented which means that there is a local solution for all primes such that at least one entry is a unit in $\Z_p$. 

To apply this result in practice, one needs an effective version stating how large $n$ has to be with respect to $Q$. For $m \ge 5$ various effective bounds were obtained by analytical methods, see for example Watson \cite{Wa1960}, Hsia and Icaza \cite{HI1999} and Browning and Dietmann \cite{Browning2007} and the references therein. However, these methods fail to provide satisfying results for $m=4$ or $m=3$, and one employs the theory of modular forms and theta series instead, as done by Blomer \cite{Bl2004}, Duke \cite{Duke2005}, Hanke \cite{Ha2004}, Rouse  \cite{Ro2019} or Schulze-Pillot \cite{SP2001}. 

To follow this latter approach, one considers 
\begin{align*}
\theta(Q,z) = \sum_{x \in \Z^m} e(\frac{1}{2}x^TQx z) = \sum_{n \geq 1} r(Q,n) e(nz)
\end{align*}
which is a modular form of weight $\frac{m}{2}$ with respect to 
\[\Gamma_0(N) = \Big\{ \begin{pmatrix}
a & b \\ c & d
\end{pmatrix} \in \operatorname{SL}_2(\Z) \mid c \equiv 0 \,(\operatorname{mod}N)\Big\},\]
where $N$ is the level of $Q$, and a quadratic character $\chi$. The idea is to approximate $\theta(Q,z)$ by a weighted average over the classes in the genus of $Q$. For this Eisenstein series, $\theta(\operatorname{gen}Q,n)$, the Fourier coefficients can be explicitly computed by
\begin{align} \label{eq:rgen}
r(\operatorname{gen} Q,n) = \frac{(2 \pi)^\frac{m}{2} n^{\frac{m}{2}-1} }{\Gamma(\frac{m}{2}) \sqrt{\det Q}} \prod_{p} \beta_p(n,Q),
\end{align}
where 
\begin{align} \label{eq:padicdensities}
\beta_p(n,Q) = \lim_{a \to \infty} p^{-a(m-1)} \# \Big\{x \in (\Z/p^a \Z)^{m} \, |\, \frac{1}{2} x^T Q x \equiv n \,(\operatorname{mod} p^{a}) \Big\}.
\end{align}

The $p$-adic densities $\beta_p(n,Q)$ contain the local restrictions. For $m \geq 4$ and $n$ primitively locally represented they satisfy ${\prod_p \beta_p(n,Q) \gg (n,N)^{-\delta} (nN)^{-\eps}}$, where ${\delta = \frac{1}{2}}$ for $m=4$ and $\delta =1 $ for $m \ge 5$. In many cases, this lower bound can be further improved, for details see Lemma \ref{lemma:lowerbound}.

The difference $f(z) = \theta(Q,z)- \theta(\operatorname{gen}Q,z)= \sum_n a(n)e(nz)$ is a cusp form. A uniform upper bound for $a(n)$ can be obtained by applying the Petersson formula, cf.\ \cite[Corollary 14.24]{Iw2004}. This gives for $m \geq 4$ that 
\begin{align} \label{eq:petersson}
a(n) \ll \norm{f} \, n^{\frac{m}{4}-\frac{1}{2}} \Big(1+ \frac{n^{\frac{1}{4}} (n,N)^{\frac{1}{4}}} {N^{\frac{1}{2}}}\Big) (nN)^\eps
\end{align} 
where the norm is induced by  the inner product
\begin{align*}
\langle f,g \rangle  = \int_{\Gamma_0(N) \backslash \Ha} f(z) \overline{g(z)} y^{\frac{m}{2}-2} dx\, dy.
\end{align*}


The case $m=3$ is more fragile and the saving in \eqref{eq:petersson} is too little. 
To circumvent this problem, one approximates $\theta(Q,z)$ by another  Eisenstein series $\theta(\operatorname{spin}Q,n)$ given as a weighted average over the classes in the spinor genus of $Q$. Then, the Fourier coefficients of $\theta(Q,z) - \theta(\operatorname{spin}Q,n)$ can be estimated by a method of Iwaniec-Duke, which uses the Shimura lift, the Kuznetsov formula and an elaborate estimate for sums over Kloosterman sums. To apply the resulting bound, \cite[Theorem 1]{Wa2017}, one employs a local argument by Blomer \cite[(1.7)]{Blomer2008} which shows that it is sufficient to bound $r(Q,n) - r(Q',n)$ for $Q,Q'$ in the same spinor genus and $(n,N)$ small. In many cases, $r(\operatorname{gen}Q,n)$ and $r(\operatorname{spn}Q,n)$ coincide, and one obtains an asymptotic formula for $r(Q,n)$. 

\subsection{Bounds for norms of theta series} The aim of this article is to give strong uniform bounds for the inner products of 
\begin{align*}
f(z) = \theta(Q,z) - \theta(\operatorname{gen}Q,z), \quad g(z)  = \theta(Q,z) - \theta(Q',z),
\end{align*}
where $Q,Q'$ are in the same genus. By definition of $\theta(\operatorname{gen}Q,z)$ any bound for $\langle g,g \rangle$ is automatically a bound for $\langle f,f \rangle$. 

The first bounds for $\langle f,f \rangle$ were only established  in the 1990's by Fomenko \cite{Fomenko1991} and Schulze-Pillot \cite{SP2001} for $m=4$. Stronger and more general estimates were proven by Blomer in his articles about ternary quadratic forms, \cite{Bl2004} and \cite{Blomer2008}. There, he shows for $m = 3$ that 
\begin{align*}
\langle g,g \rangle \ll N^{\frac{3}{2}+\eps} \quad \text{and} \quad \langle f,f \rangle \ll \sqrt{a} (\det Q)^{\frac{2}{3}}
\end{align*} 
where the latter bound is only valid for diagonal $Q$ with smallest entry $a$. For $m \ge 4$, Blomer obtains in the appendix of \cite{Sa2018} that 
\begin{align*}
\langle g,g \rangle \ll N^{m-2+\eps} + N^{\frac{m-3}{2}} \sqrt{\det Q} + (\det Q)^{1- \frac{1}{m}}. 
\end{align*}
In the special case that $\det Q$ is a fundamental discriminant, Rouse \cite{Rouse2014}  shows for quaternary forms that $\langle f,f \rangle \asymp N$ if $\min(NQ^{-1}) \ll 1$, where $\min Q $ denotes the smallest integer represented by $q(x) = \frac{1}{2}x^T Qx$. 

To state our results, we introduce further notation.  For odd $p$, the form $q(x)$ is equivalent over $\Z_p$ to 
\begin{align} \label{eq:padicform}
p^{\nu_1(p)} u_1 x_1^2+ p^{\nu_2(p)} u_2  x_2^2+ \ldots + p^{\nu_m(p)} u_m x_m^2 \quad \text{ with } u_i \in \Z_p^\times
\end{align}
and over $\Z_2$ to
\begin{align} \label{eq:2adicform}
\sum_{i=1}^{r_1} 2^{\tilde{\nu}_i} (x_{2i-1}^2 + x_{2i-1} x_{2i} + x_{2i}^2) +  \sum_{i=r_1+1}^{r_2} 2^{\tilde{\nu}_i}   x_{2i-1} x_{2i}  + \sum_{i=2r_2+1}^m 2^{\nu_i(2)} u_i x_i^2
\end{align} 
where  $u_i \in (\Z_2)^{\times}$, $0 \le 2r_1 \le 2 r_2\le m$ and $\nu_i(2) \geq 1$. We set $\nu_{2j}(2) = \nu_{2j-1}(2) := \tilde{\nu}_j$ for $j \leq r_2$ and define $
v_p(j) = \# \{1 \leq i \le m \mid \nu_i(p) \ge j\}
$. Moreover, for $s \geq 1$, we set
\begin{align} \label{eq:F(Q,s)}
F(Q,s) :=  2^{\operatorname{min}(m -2 r_2,s)} \prod_{p \mid N} p^{\mu_p(s)}  \quad \text{for} \quad  \mu_p(s) = \sum_{j=1}^{v_p(N)} \min(v_p(j),s). 
\end{align} 
Then, $F(Q,s)$ is growing in $s$ for $1 \le s \le m$ and  $N/2 \leq F(Q,s) \leq F(Q,m) = \det Q$. Furthermore, $F(Q,s)$ is a genus invariant term.  


\begin{thm} \label{thm:main} Let $Q$ correspond to a primitive, integral, positive quadratic form of level $N$ in $m$ variables and $f(z)=\theta(Q,z) - \theta(\operatorname{gen}Q,z)$. Then, 
\begin{align*}
\langle f,f \rangle  \ll  \begin{cases}[1.6]
\frac{N^{1+\eps}}{(\det Q)^{1/3}}  & \text{if } m=3, \\ 
\frac{N^{2+\eps}}{F (Q,2)} + \frac{N^{1+\eps}}{(\det Q)^{1/4}}  & \text{if } m=4, \\ 
\frac{N^{m/2+\eps}}{F (Q, \frac{m-1}{2}-\frac{1}{m})}  & \text{if } m\geq 5. 
\end{cases} 
\end{align*}
These bounds also hold for $\langle g,g \rangle$ and $g(z)= \theta(Q,z)-\theta(Q',z)$ provided that $Q'$ lies in the same genus as $Q$. 	
\end{thm}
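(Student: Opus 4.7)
The overall strategy is to expand $\langle g, g \rangle$ into a sum of Petersson pairings of individual theta series in the genus, evaluate each pairing explicitly via Rankin--Selberg unfolding, identify the genus-invariant main term that cancels in the four-term expansion, and then bound what remains using the local $p$-adic structure of $Q$ recorded in \eqref{eq:padicform}--\eqref{eq:2adicform}. Since $\theta(\operatorname{gen}Q)$ is by definition a weighted average of $\theta(Q')$ over the genus, the bound for $\langle g,g\rangle$ implies the one for $\langle f,f\rangle$, so I focus on $g$.

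First, I would write
\begin{align*}
\langle g, g \rangle = \langle \theta(Q), \theta(Q) \rangle - 2\,\mathrm{Re}\,\langle \theta(Q), \theta(Q') \rangle + \langle \theta(Q'), \theta(Q') \rangle,
\end{align*}
regularised via the cuspidality of $g$, and reduce the problem to the evaluation of $\langle \theta(Q_1), \theta(Q_2) \rangle$ for $Q_1, Q_2$ in the genus of $Q$. Pairing $\theta(Q_1, z)\overline{\theta(Q_2,z)}$ against a real-analytic Eisenstein series $E(z,s)$ on $\Gamma_0(N)$ and unfolding produces the Dirichlet series
\begin{align*}
\frac{\Gamma(s+m/2-1)}{(4\pi)^{s+m/2-1}} \sum_{n \geq 1} \frac{r(Q_1,n)\, r(Q_2,n)}{n^{s+m/2-1}},
\end{align*}
whose residue at the rightmost pole of $E(z,s)$ recovers the Petersson pairing up to computable constants. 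Equivalently, and more convenient for isolating arithmetic data, I would identify $\theta(Q_1,z)\overline{\theta(Q_2,z)}$ as the restriction to a diagonal of a Siegel theta series of genus $2$ attached to $Q_1 \oplus (-Q_2)$ and apply the Siegel--Weil formula, so that the pairing is expressed as an explicit integral of a Siegel--Eisenstein series, evaluated through products of the local densities $\beta_p$ defined in \eqref{eq:padicdensities}.

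Next I would split the resulting explicit expression into a \emph{genus-invariant main term}, which depends only on the genus of $Q_1 \oplus (-Q_2)$ and hence coincides for every choice of $Q_1, Q_2$ in the genus of $Q$, and an \emph{off-diagonal error}. The main terms cancel exactly in the four-term signed sum for $\langle g, g \rangle$. The surviving error counts pairs $(x,y) \in \Z^m \times \Z^m$ with $\tfrac12 x^T Q_1 x = \tfrac12 y^T Q_2 y$, weighted by the exceptional part of the local densities at primes $p \mid N$ and $p=2$. Controlling these densities via \eqref{eq:padicform}--\eqref{eq:2adicform} produces the factor $F(Q,s)^{-1}$: the contribution at each bad $p$ is bounded by the local invariants $v_p(j)$, and summing these over $j$ yields exactly the exponent $\mu_p(s)$ in \eqref{eq:F(Q,s)}. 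The parameter $s=(m-1)/2-1/m$ for $m\geq 5$ arises by optimising between the two trivial extremes $F(Q,1)\asymp N$ and $F(Q,m)=\det Q$, matched against the available uniform pointwise bound \eqref{eq:petersson} for the off-diagonal coefficients. For $m=4$ the cleaner value $s=2$ is attainable, but a separate term $N^{1+\varepsilon}(\det Q)^{-1/4}$ appears from counting short lattice vectors in the error sum via the geometry of numbers; the analogous short-vector contribution in the $m=3$ case is responsible for the bound $N^{1+\varepsilon}(\det Q)^{-1/3}$.

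The main obstacle will be the case $m=3$, where $\theta(Q,z)$ has half-integral weight $3/2$ and neither the classical Siegel--Weil formulation nor the Rankin--Selberg unfolding directly applies in the convenient form used for $m\geq 4$. Here I would invoke the Shimura lift to transfer the problem to a weight-$2$ form on a related congruence subgroup, apply the tools developed in \cite{Bl2004}, and carefully track the multiplicative factors introduced by the lift. A secondary, technical obstacle is the contribution at $p=2$: the factor $2^{\min(m-2r_2,s)}$ in \eqref{eq:F(Q,s)} reflects the hyperbolic planes in \eqref{eq:2adicform} and must be extracted by an explicit evaluation of the dyadic local density using its splitting into hyperbolic and anisotropic summands.
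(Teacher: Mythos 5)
Your strategy is genuinely different from the paper's, but as written it has gaps that I do not think can be papered over. The paper never forms the individual pairings $\langle \theta(Q_1),\theta(Q_2)\rangle$: these diverge because theta series are not cuspidal, and your appeal to ``regularisation via the cuspidality of $g$'' does not resolve this, since once you expand $\langle g,g\rangle$ into four terms each cross term is a divergent integral. A regularised Rankin--Selberg or Siegel--Weil treatment of $Q_1\oplus(-Q_2)$ is a known road, but the entire quantitative content of the theorem --- the emergence of $F(Q,s)$ with the specific exponents $s=2$ and $s=\frac{m-1}{2}-\frac1m$, and the secondary terms $N^{1+\eps}(\det Q)^{-1/4}$ and $N^{1+\eps}(\det Q)^{-1/3}$ --- is asserted in your sketch (``controlling these densities \dots produces the factor $F(Q,s)^{-1}$'', ``arises by optimising'') rather than derived, and it is not at all clear that the local densities of the split form at bad primes would reproduce these exponents. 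In the paper these quantities arise from a completely different mechanism: the inner product is unfolded directly as in \eqref{eq:4.4} over coset representatives $\varrho_{a,c}$ of $\Gamma(N)\backslash\operatorname{SL}_2(\Z)$, the transformed theta series $\theta(Q,z)|[\varrho]$ is computed explicitly (Lemma \ref{lemma:trafothetaseries}) as a theta series attached to $S_\varrho=\frac{N}{\hat d}D_\varrho\tilde Q^{-1}D_\varrho$, the second moments $\sum_{n\le l}r(S_\varrho,n)^2$ are bounded by elementary lattice-point counting in a reduced Siegel domain (Lemma \ref{eq:thetaseriesl}), and $F(Q,s)$ appears when $\det D_\varrho$ is summed over $d=(c,N)\mid N$ via \eqref{eq:est1}. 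The secondary terms come from the short-vector contribution $a_1'a_2'\ll(\det S_\varrho)^{2/m}$ in that count, not from a Siegel--Weil main term.

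Your diagnosis of the $m=3$ case is also off. The half-integral weight causes no difficulty for the norm bound: the paper proves the $m=3$ case of Theorem \ref{thm:main} by exactly the same unfolding and lattice-counting argument as $m\ge 4$. The Shimura lift and the machinery of \eqref{eq:2} enter only later, when one needs pointwise bounds on individual Fourier coefficients of the cuspidal part; a detour through the lift would not yield a uniform bound on $\langle f,f\rangle$ itself, since the lift does not control Petersson norms in the required uniform way. So the two concrete things missing from your proposal are: a convergent formulation of the inner product (the paper's answer is to never separate the difference $g$), and an actual derivation of the $d$-dependence of the transformed theta series at each cusp, which is where all of the arithmetic of \eqref{eq:F(Q,s)} lives.
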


This is a significant improvement over previous results. For example, if $m \geq 4$  and $\det Q \asymp N$, we obtain $\langle f,f \rangle \ll N^{\frac{m}{2}-1}$ which is only the square root of the bound $\langle f,f \rangle \ll N^{m-2}$ in \cite{Sa2018}. This saving increases if the determinant is large compared to the level. 

In applications, one often encounters special types of quadratic forms. A typical example are diagonal forms: 

\begin{thm} \label{thm:main2} Let $Q,N,f$ be as in the previous theorem. In addition, we assume that $Q$ is diagonal with entries $a_1 \le \ldots \le a_m$ and $m \geq 3$. Then, 
	\begin{align*} 
	\langle f,f \rangle \ll \Big(\frac{N^\frac{m}{2}}{F(Q, \frac{m}{2})} + \frac{ N }{\sqrt{a_m a_{m-1}}}\Big) N^\eps. 
	\end{align*} 
\end{thm}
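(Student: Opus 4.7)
The plan is to adapt the approach underlying Theorem~\ref{thm:main}, exploiting the factorisation $\theta(Q,z)=\prod_{i=1}^m \vartheta_{a_i}(z)$ with $\vartheta_a(z)=\sum_{x\in\Z} e(a x^2 z/2)$, which is available precisely when $Q$ is diagonal. We first write $\langle f,f\rangle = \langle \theta(Q),\theta(Q)\rangle - \langle \theta(\operatorname{gen} Q),\theta(\operatorname{gen} Q)\rangle$ by orthogonality of the cuspidal and Eisenstein subspaces. A Rankin--Selberg unfolding against a real-analytic Eisenstein series for $\Gamma_0(N)$ then represents each of these inner products as a constant multiple of the residue at $s=1$ of the Dirichlet series $\sum_n r(Q,n)^2 n^{-s-m/2+1}$, respectively of its genus analogue, reducing the problem to the asymptotic evaluation of
\[
S(X) \;=\; \sum_{n\le X} r(Q,n)^2 \;=\; \#\bigl\{(x,y)\in\Z^{2m}\,:\,q(x)=q(y)\le X\bigr\}.
\]

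For diagonal $Q$, the substitution $u_i=x_i-y_i$, $v_i=x_i+y_i$ (subject to $u_i\equiv v_i\pmod 2$) converts $q(x)=q(y)$ into the hyperbolic equation $\sum_i (a_i/2)\,u_iv_i=0$, and we stratify $S(X)$ by the set of indices where $u_iv_i=0$. The fully diagonal stratum $x=\pm y$ matches, up to the local factors at primes $p\mid N$, the corresponding stratum of $\sum_n r(\operatorname{gen}Q,n)^2$, so it cancels against the Eisenstein contribution; what survives is governed by $\prod_{p\mid N}\beta_p(n,Q)$. Since $Q$ is already diagonalised at every odd prime, the exponents $\mu_p(s)$ appearing in \eqref{eq:F(Q,s)} can be matched at the sharper value $s=m/2$ rather than $s=\tfrac{m-1}{2}-\tfrac{1}{m}$ required in the general Theorem~\ref{thm:main}, and this yields the first term $N^{m/2+\eps}/F(Q,m/2)$.

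The second term $N/\sqrt{a_m a_{m-1}}$ is expected to come from the strata on which both $u_m v_m\ne 0$ and $u_{m-1}v_{m-1}\ne 0$. There the trivial lower bounds $|u_m v_m|,|u_{m-1}v_{m-1}|\ge 1$, combined with the ranges $|u_i|,|v_i|\ll\sqrt{X/a_i}$ and the single linear constraint $\sum (a_i/2)u_iv_i=0$ (which determines one variable in terms of the others up to divisor-function losses), produce a count of order $X^{m-1}(a_m a_{m-1})^{-1/2+\eps}$, the extra factor $(a_m a_{m-1})^{-1/2}$ being the gain from isolating the two largest coefficients. The main technical obstacle will be the treatment of intermediate strata, where only some coordinates are off-diagonal: these require an induction on rank, invoking the analogue of Theorem~\ref{thm:main} for the still-diagonal coordinates while extracting the saving in the remaining ones without double-counting against the Eisenstein contribution. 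The book-keeping of local densities at primes $p$ dividing several of the $a_i$ simultaneously, which is encoded in the exponents $\mu_p(m/2)$, is the delicate arithmetic input that makes the sharper exponent $m/2$ accessible in this diagonal setting.
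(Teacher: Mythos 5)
Your proposal follows a genuinely different route from the paper --- a regularized Rankin--Selberg / lattice-point-counting argument rather than the paper's method, which unfolds $\langle f,f\rangle$ over $\Gamma(N)\backslash \operatorname{SL}_2(\Z)$, applies the explicit theta transformation formula of Lemma~\ref{lemma:trafothetaseries} at each cusp $\varrho$, bounds $\sum_{n} r(S_\varrho,n)^2$ for the transformed forms $S_\varrho=\frac{N}{\hat d}D\tilde Q^{-1}D$ via Lemma~\ref{eq:thetaseriesl}, and in the diagonal case exploits that the entries of $S_\varrho$ are explicitly $N(a_i,d)^2/(d\,a_i)$ (the extra input being that $r(\operatorname{gen}S_\varrho,n)$ contributes only $O(N^\eps)$ by \eqref{eq:genusupperbound}). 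However, your route as written has two serious gaps. First, the opening identity $\langle f,f\rangle=\langle\theta(Q),\theta(Q)\rangle-\langle\theta(\operatorname{gen}Q),\theta(\operatorname{gen}Q)\rangle$ is a difference of two divergent integrals: $\theta(Q,z)$ has nonzero constant term at every cusp, so $\int|\theta(Q,z)|^2 y^{m/2-2}\,dx\,dy$ diverges like $\int^\infty y^{m/2-2}\,dy$ for $m\ge 3$. Making this step rigorous requires a regularized Rankin--Selberg theory (truncation or Zagier-type regularization), which is not a routine footnote and in particular does not simply hand you ``the residue at $s=1$'' of $\sum_n r(Q,n)^2 n^{-s-m/2+1}$.

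Second, and more fundamentally, the cancellation structure in your stratification is misassigned. In $S(X)=\#\{q(x)=q(y)\le X\}$ the fully diagonal stratum $x=\pm y$ contributes only $O(2^m\sum_{n\le X}r(Q,n))\ll X^{m/2+\eps}/\sqrt{\det Q}$, which is far below the true main term $\asymp X^{m-1}/\det Q$ of both $\sum_{n\le X}r(Q,n)^2$ and $\sum_{n\le X}r(\operatorname{gen}Q,n)^2$. It is the \emph{generic} (fully off-diagonal) stratum of the hyperbolic count $\sum_i a_iu_iv_i=0$ that carries the main term and must cancel against the Eisenstein contribution; the diagonal stratum is a lower-order term. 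Exhibiting that cancellation to the required precision --- i.e.\ showing that the generic lattice count agrees with $\sum_n r(\operatorname{gen}Q,n)^2$ up to an error of size $X^{m/2-1}(N^{m/2}/F(Q,\frac m2)+N/\sqrt{a_ma_{m-1}})X^{1+\eps}$, uniformly in $Q$ --- is essentially the entire content of the theorem, and neither the stratification nor the asserted ``divisor-function losses'' in solving the linear constraint $\sum a_iu_iv_i=0$ delivers it. The intermediate strata, which you defer as ``the main technical obstacle,'' are likewise left without an argument. As it stands the proposal is a plausible heuristic for where the two terms of the bound should come from, but not a proof.
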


Here, $F(Q,\frac{m}{2})$ is of the same size as $\det Q$ if  the common divisor of any $\lfloor \frac{m}{2} \rfloor+1$ different elements out of $\{a_1,\ldots,a_m\}$ is  bounded by a constant. 

We also provide a lower bound: 
\begin{thm} \label{thm:3} Let $Q,n,f$ be defined as in Theorem \ref{thm:main} and $M = \min_{x \in \Z^m} \frac{1}{2} x^T N Q^{-1}x$. Then, it holds for $m \geq 3$ that
	\begin{align*}
	\langle f,f \rangle \gg \frac{N^\frac{m}{2}}{\det Q} M^{1- \frac{m}{2}} + \mathcal{O}(N^\eps).
	\end{align*}
\end{thm}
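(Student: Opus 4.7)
The plan is to extract a lower bound for $\|f\|^2$ from a single Fourier coefficient of $f$ at the cusp $0$. I use the Atkin--Lehner involution $W_N = \begin{pmatrix} 0 & -1 \\ N & 0 \end{pmatrix}$, which preserves the Petersson norm, so it suffices to bound one Fourier coefficient of $f|_{m/2} W_N$ at $\infty$ from below. By Poisson summation one has $\theta(Q, -1/(Nz)) = (Nz/i)^{m/2}(\det Q)^{-1/2}\,\theta(NQ^{-1}, z)$, where $NQ^{-1}$ is integral with even diagonal because $N$ is the level of $Q$, giving
\begin{equation*}
(\theta(Q,\cdot)|_{m/2} W_N)(z) = \frac{i^{-m/2} N^{m/4}}{\sqrt{\det Q}}\, \theta(NQ^{-1}, z).
\end{equation*}
Since the map $Q_i \mapsto NQ_i^{-1}$ is a bijection between the genera of $Q$ and $NQ^{-1}$ that preserves the Siegel weights $|\operatorname{Aut}(Q_i)|^{-1} = |\operatorname{Aut}(NQ_i^{-1})|^{-1}$, the same identity holds with $\theta(\operatorname{gen}Q,\cdot)$ and $\theta(\operatorname{gen}(NQ^{-1}),\cdot)$ in place of $\theta(Q,\cdot)$ and $\theta(NQ^{-1},\cdot)$.

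Next, I target the $M$-th Fourier coefficient of $f|W_N$, which equals $\frac{i^{-m/2}N^{m/4}}{\sqrt{\det Q}}$ times $r(NQ^{-1},M) - r(\operatorname{gen}(NQ^{-1}),M)$. By the very definition of $M$ one has $r(NQ^{-1},M) \geq 2$, while \eqref{eq:rgen} applied to the dual form together with the crude bound $\prod_p \beta_p(M,NQ^{-1}) \ll (MN)^\eps$ yields
\begin{equation*}
r(\operatorname{gen}(NQ^{-1}),M) \ll \frac{M^{m/2-1}\sqrt{\det Q}}{N^{m/2}}\,(MN)^\eps.
\end{equation*}
If the right-hand side is $o(1)$, the targeted coefficient has absolute value $\gg N^{m/4}/\sqrt{\det Q}$; otherwise $N^{m/2}/(\det Q\cdot M^{m/2-1}) \ll N^\eps$ and the claim is trivially absorbed in the additive $O(N^\eps)$ error.

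Finally, inserting this into the Petersson-type bound \eqref{eq:petersson} applied to $f|W_N$ at $n = M$ (with the analogous weight-$3/2$ estimate when $m=3$, which is sharper here since no Kloosterman contribution is needed at first-cusp order), and recalling $\|f|W_N\| = \|f\|$, gives
\begin{equation*}
\frac{N^{m/4}}{\sqrt{\det Q}} \ll \|f\|\, M^{m/4-1/2}\Bigl(1 + \frac{M^{1/4}(M,N)^{1/4}}{N^{1/2}}\Bigr)(MN)^\eps.
\end{equation*}
Squaring and rearranging yields $\|f\|^2 \gg N^{m/2}/(\det Q)\,M^{1-m/2}$ up to an $N^\eps$ loss, once one checks that the bracketed factor is $O(N^\eps)$ in the relevant regime (outside this regime, the target is again dominated by $N^\eps$). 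The step requiring the most care is the first one: verifying cleanly that $W_N$ intertwines $\theta(\operatorname{gen}Q,\cdot)$ with $\theta(\operatorname{gen}(NQ^{-1}),\cdot)$ up to a scalar, matching the two genera class-by-class together with their automorphism groups, and tracking the character and (when $m$ is odd) the half-integral weight multiplier under $W_N$.
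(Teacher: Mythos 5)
Your route is essentially dual to the paper's and reaches the same conclusion, but the mechanisms differ in a way worth spelling out. The paper never inverts a coefficient bound: it unfolds $\langle f,f\rangle$ over cosets of $\Gamma(N)$ as in \eqref{eq:4.4}, throws away everything except the cosets with $(c,N)=1$ (where Lemma \ref{lemma:trafothetaseries} gives $|a_\varrho(n)|^2=(\det Q)^{-1}|r(NQ^{-1},n)-r(\operatorname{gen}NQ^{-1},n)|^2$ exactly), subtracts the genus contribution, which is $O(N^\eps)$ by \eqref{eq:genusupperbound}, and then keeps only the single term $n=M$ of the remaining positive sum, using Hermite's bound $M\ll N(\det Q)^{-1/m}$ to make the exponential weight harmless. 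This is a pure positivity argument and works uniformly for all $m\ge 3$. You instead pass to the cusp $0$ via the Fricke involution and then invert a coefficient-versus-norm inequality at $n=M$; the dichotomy you set up on the size of $r(\operatorname{gen}(NQ^{-1}),M)$ is sound (though note Corollary \ref{cor:theta} only gives $\prod_p\beta_p\ll (M,N)^{1/2}(MN)^\eps$, not $(MN)^\eps$; the extra $(M,N)^{1/2}\le M^{1/2}$ still lands you in the absorbable regime since $M\ll N$ and $\det Q\gg N$), and for $m\ge 4$ the bracket in \eqref{eq:petersson} is indeed $O(1)$ at $n=M\le N$, so the argument closes, at the cosmetic cost of a multiplicative $(MN)^{-\eps}$ rather than the paper's additive $O(N^\eps)$.

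The one genuine soft spot is $m=3$. You appeal to "the analogous weight-$3/2$ estimate," but \eqref{eq:petersson} is stated only for $m\ge 4$ and its weight-$3/2$ substitute \eqref{eq:2} is both restricted to $g\in U^{\perp}$ (which $f|W_N$ need not satisfy) and too weak at $n=M$, since terms like $M^{3/16}N^{-1/16}$ need not be $O(N^\eps)$. What saves you is that at $n=M\le N$ you do not need the Petersson formula at all: the strip $\{0\le x<1,\ y>1/N\}$ injects into $\Gamma_0(N)\backslash\Ha$, so
\begin{align*}
\norm{g}^2\ \ge\ \int_{1/M}^{2/M}\int_0^1|g(x+iy)|^2\,y^{\frac{m}{2}-2}\,dx\,dy\ \gg\ |a(M)|^2\,M^{1-\frac{m}{2}}
\end{align*}
for any cusp form $g$ of weight $\frac m2$ and level $N$ and any $1\le M\le N$, which is exactly the bound $|a(M)|\ll\norm{g}M^{\frac m4-\frac12}$ you need, Kloosterman-free and valid for all $m\ge3$. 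You should supply this in place of the appeal to \eqref{eq:petersson}; once you do, your proof is correct, and in fact this elementary inequality applied to $f|W_N$ is precisely the paper's unfolding argument in disguise.
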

This shows that, without further assumptions, our analysis is sharp. Indeed, for  $q(x) = x_1^2 + \ldots + x_{m-1}^2 + N x_m^2$ upper and lower bound in Theorem \ref{thm:main2} and \ref{thm:3} only differ by an $N^\eps$ factor. For the minimum $M$ of $NQ^{-1}$ Hermite's theorem states that $M \ll N (\det Q)^{-\frac{1}{m}}$, which yields that $\langle f,f \rangle \gg N (\det Q)^{-\frac{1}{2}-\frac{1}{m}} + \mathcal{O}(N^\eps)$. 

One of the key ingredients for the proof of Theorem \ref{thm:main} and \ref{thm:main2} is an upper bound for $r(Q,n)$. This result is of independent interest and can be found in Lemma \ref{eq:thetaseriesl}. For the genus theta series, we extend a bound of Blomer \cite{Blomer2008}  for ternary forms to  $m \geq 3$: 
\[
r(\operatorname{gen} Q,n) \ll \frac{n^{\frac{m}{2}-1}(n,N)^{\frac{1}{2}}}{\sqrt{\det Q}} (nN)^\eps.
\]
The principal use of Theorem \ref{thm:main} and \ref{thm:main2} is to improve the error  $|r(Q,n) - r(\operatorname{gen}Q,n)|$. For the corresponding results and effective lower bounds for Tartakovski's theorem with respect to $Q$, we refer the reader to  Lemma \ref{lemma:m=3}  and \ref{lemma:m=4}. 

\subsection{Applications} 
By Gauss and Siegel it is known  that every positive integer $n \not \equiv 0,4,7 \,(\operatorname{mod}8)$ can be expressed as a sum of three squares $n=x_1^2 + x_2^2 + x_3^2$ in $n^{\frac{1}{2}+o(1)}$ ways. Therefore, one would expect that we can still represent $n$ if we restrict $x_j$ to a plausible subset of the integers and there is no obvious local obstruction. A famous conjecture in this regard is that every 
\begin{align} \label{eq:n} 
n \equiv 3 \,(\operatorname{mod} 24), 5 \nmid n
\end{align} 
can be represented as the sum of three squares of primes. Current technology is not sufficient to prove this; however, using a vector sieve, Blomer and Brüdern \cite{BB2005} obtain similar findings for almost primes. The principal input for this sieve is a uniform bound for $ r(Q,n) - r(\operatorname{gen}Q,n)$, where $q(x)= \frac{1}{2}x^T Q x = l_1^2 x_1^2 + l_2^2 x_2^2 + l_3^2 x_3^2$.

By this approach, they show that every $n$ satisfying \eqref{eq:n} is represented by 
\begin{align*}
n = x_1^2 + x_2^2 + x_3^2 \text{ with } x_j \in P_{521},
\end{align*}
where $P_r$ denotes all integers with at most $r$ prime factors. Subsequently, L\"u \cite{L2007} and Cai \cite{Cai2012} optimized the sieving process by including weights and Blomer \cite{Blomer2008} improved his estimates for Fourier coefficients of the cuspidal part of the theta series. Taken together, this allows to choose $x_j  \in P_{106}$ and $x_1 x_2 x_3 \in P_{304}$. By Theorem \ref{thm:main} and a small modification of L\"u's approach, we obtain: 

\begin{kor} \label{cor:m=3} Every sufficiently large $n$ with $n \equiv 3 \,(\operatorname{mod} 24)$ and $5 \nmid n$ can be represented in the form $n=x_1^2 + x_2^2 + x_3^2$ for integers $x_1,x_2,x_3$ with $x_1 x_2 x_3 \in P_{72}$.  
\end{kor}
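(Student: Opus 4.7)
The plan is to follow the vector-sieve strategy of Blomer--Brüdern \cite{BB2005} and Lü \cite{L2007} with Theorem \ref{thm:main} as the new analytic input. First, for squarefree parameters $l_1,l_2,l_3$ supported on primes $\leq z:=n^{1/s}$, I would consider the auxiliary ternary form given by $Q_l=\operatorname{diag}(2l_1^2,2l_2^2,2l_3^2)$, which has level $N\asymp(l_1l_2l_3)^2$ and determinant $\det Q_l\asymp(l_1l_2l_3)^2$. Inclusion--exclusion over $l_1,l_2,l_3$ together with the three-dimensional weighted sieve of \cite{L2007} expresses the number of admissible representations $n=x_1^2+x_2^2+x_3^2$ with $x_1x_2x_3\in P_{72}$ as a linear combination of the counts $r(Q_l,n)$, with $l_1l_2l_3$ up to a level of distribution that is in turn governed by the uniform error for $r(Q_l,n)-r(\operatorname{gen}Q_l,n)$.

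Second, I would supply a uniform asymptotic $r(Q_l,n)=r(\operatorname{gen}Q_l,n)+E(Q_l,n)$. For the main term, \eqref{eq:rgen} combined with a lower bound for the $p$-adic densities $\beta_p(n,Q_l)$ under the assumptions $n\equiv 3\,(\operatorname{mod}24)$ and $5\nmid n$ gives $r(\operatorname{gen}Q_l,n)\gg n^{1/2}(l_1l_2l_3)^{-1-\varepsilon}$ uniformly in $l_1,l_2,l_3$. For the error, Theorem \ref{thm:main} yields $\|\theta(Q_l,\cdot)-\theta(\operatorname{gen}Q_l,\cdot)\|^2\ll N^{1+\varepsilon}(\det Q_l)^{-1/3}\ll(l_1l_2l_3)^{4/3+\varepsilon}$; combining this with the Shimura lift and Duke--Iwaniec's bound on Fourier coefficients of half-integral weight cusp forms (the content of Lemma \ref{lemma:m=3}) then gives a pointwise error $E(Q_l,n)$ strictly smaller than the one used in \cite{Blomer2008,L2007}.

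Third, inserting the improved remainder into Lü's weighted sieve and redoing his numerical optimization of the sifting parameter $s$, the admissible level of distribution grows, and the number of prime factors admitted in $x_1x_2x_3$ drops from $304$ in \cite{Blomer2008} down to $72$. The combinatorial architecture of the sieve (weights, buffer inequality, Buchstab-type iteration) is unchanged; only the exponent in the error term is updated.

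The main obstacle is quantitative bookkeeping rather than a new idea. One must verify that (i) the lower bound for $r(\operatorname{gen}Q_l,n)$ is uniform across the entire sieve range, (ii) the $(n,N)^{1/4}$ factor and the $N^\varepsilon$ losses in the Petersson inequality \eqref{eq:petersson} do not overwhelm the saving of Theorem \ref{thm:main} after summation over $(l_1,l_2,l_3)$, and (iii) the passage from the genus to the spinor genus is harmless for these diagonal forms under the stated congruence conditions on $n$, which is classical. The final numerical value $72$ then emerges from solving the same transcendental optimization as in \cite{L2007} with the new error exponent.
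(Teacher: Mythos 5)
Your overall architecture is the paper's: inclusion--exclusion over $\mathbf{l}=(l_1,l_2,l_3)$ with $[l_1,l_2,l_3]=d$, the three-dimensional weighted sieve of Diamond--Halberstam--Richert as packaged by L\"u \cite{DHR1997,L2007}, main term from the local densities of \cite{BB2005}, and remainder controlled by Lemma \ref{lemma:m=3}, i.e.\ by Theorem \ref{thm:main} combined with the Duke--Iwaniec/Shimura-lift bound \eqref{eq:2} after passing from the genus to the spinor genus. (One small correction of emphasis: for $m=3$ the pointwise error comes from \eqref{eq:2}, not from the Petersson inequality \eqref{eq:petersson}, whose saving the paper explicitly notes is insufficient in the ternary case; your list of obstacles conflates the two, although you do name Lemma \ref{lemma:m=3} as the operative tool.)

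There is, however, a concrete quantitative gap that as written would not deliver the constant $72$. The level of $Q_{\mathbf{l}}=\operatorname{diag}(2l_1^2,2l_2^2,2l_3^2)$ is $N=4[l_1,l_2,l_3]^2=4d^2$, which depends only on the least common multiple, whereas the determinant is $8(l_1l_2l_3)^2$; you take $N\asymp (l_1l_2l_3)^2$. This distinction is exactly what the paper exploits: the prefactor in Lemma \ref{lemma:m=3} is $\sqrt{N}(\det Q)^{-1/6}\ll d\cdot (l_1l_2l_3)^{-1/3}\le d^{2/3}$, and the dominant contribution to the remainder sum is $\sum_{d\le n^{\tau/2}} d^{2/3-2/7}n^{13/28}$, which is $\ll n^{1/2-\eps}$ precisely for $\tau<\tfrac{3}{58}$. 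With your level $N\asymp(l_1l_2l_3)^2$, triples such as $l_1=l_2=l_3=d$ force the corresponding term up to $d^{8/7}n^{13/28}$, the admissible level of distribution drops to roughly $\tau<\tfrac{1}{30}$, and the optimization $r>\min_\zeta m(\zeta)$ (which at $\tau=3/58$ gives $\approx 71.39$, hence $r=72$) would land well above $100$. Since the entire content of the corollary is the numerical value $72$, you must use the correct level $4d^2$ and then actually carry out the Halberstam--Richert estimate of the sieve functions and the minimization over $\zeta$; neither step is present in your sketch.
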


The dual problem is to represent an integer by squares of smooth numbers. For three variables, this is a hard task and only a small saving is possible. An application of Theorem \ref{thm:main2} yields:
\begin{kor} \label{cor:smooth} For sufficiently large $n \not\equiv 0,4,7\,(\operatorname{mod}8)$, there is a solution of ${n= x_1^2 + \ldots + x_m^2}$ with $x_1,\ldots,x_m \in \Z$ such that every prime divisor $p \mid x_1\cdots x_m$ satisfies 
\begin{align*}
&p \leq n^{\frac{\eta}{2}+\eps}, \eta = \frac{57}{58} \quad ~ \, \text{if } m=3 \text { and } n \equiv 0,4,7 \, (\operatorname{mod} 8), \\
&p \leq n^{\frac{\theta}{2}+\eps}, \theta = \frac{285}{464}  \quad  \text{if } m=4.
\end{align*}
\end{kor}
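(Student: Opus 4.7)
The plan is to follow the smooth-sieve strategy familiar from \cite{BBD2009}. Set $P := n^{\eta/2+\epsilon}$ (respectively $P := n^{\theta/2+\epsilon}$) and let $Q_0 := 2 I_m$. Writing $R(n)$ for the number of representations $n = x_1^2+\ldots+x_m^2$ with every prime divisor of $x_1\cdots x_m$ at most $P$, a trivial union bound yields
\[
R(n) \;\geq\; r(Q_0, n) \;-\; \sum_{i=1}^m \sum_{\substack{P < p \leq \sqrt n \\ p \text{ prime}}} r\bigl(Q_p^{(i)}, n\bigr),
\]
where $Q_p^{(i)}$ is the diagonal form obtained from $Q_0$ by scaling the $i$-th variable by $p$; the second sum counts, for each prime $p > P$, the representations in which $x_i$ is divisible by $p$ via the substitution $x_i \mapsto p y_i$.

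For each $Q_p^{(i)}$ we decompose $r(Q_p^{(i)}, n) = r(\operatorname{gen} Q_p^{(i)}, n) + a_p^{(i)}(n)$ and treat the two contributions separately. The main term is estimated using \eqref{eq:rgen} together with the evaluation of the local densities $\beta_q(n, Q_p^{(i)})$, which differ from those of $Q_0$ only at $q\in \{2, p\}$; one obtains $r(\operatorname{gen} Q_p^{(i)}, n) \ll r(\operatorname{gen} Q_0, n)/p$ up to $(nN)^\epsilon$ factors. Mertens' theorem then gives
\[
\sum_{i=1}^m \sum_{P < p \leq \sqrt n} r(\operatorname{gen} Q_p^{(i)}, n) \;\ll\; m\, r(\operatorname{gen} Q_0, n)\, \log\frac{1}{\eta+2\epsilon},
\]
which lies strictly below $r(\operatorname{gen} Q_0, n)$ once the smoothness exponent is sufficiently close to $1$.

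For the cuspidal error one applies Theorem \ref{thm:main2} to the diagonal form $Q_p^{(i)}$, whose two largest diagonal entries are $2p^2$ and $2$, and whose level is $N = 4p^2$. For $m \geq 4$ the Petersson-type bound \eqref{eq:petersson} yields a pointwise estimate on $a_p^{(i)}(n)$ in terms of $\langle f_p^{(i)}, f_p^{(i)} \rangle$, and summation over $P < p \leq \sqrt n$ produces a cuspidal contribution that is negligible compared to $r(Q_0,n)$ as long as $P$ stays on the correct scale. For $m = 3$ the Petersson inequality is too weak and must be replaced by the Iwaniec--Duke type estimate \cite[Theorem 1]{Wa2017} for Fourier coefficients of ternary cusp forms, fed with the ternary case of Theorem \ref{thm:main} as norm input; here the local reduction of \cite[(1.7)]{Blomer2008} ensures that only the spinor-genus difference $r(Q_p^{(i)},n)-r(\operatorname{spn}Q_p^{(i)},n)$ needs to be bounded.

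The main obstacle is the numerical optimization of the smoothness exponent against the sum of the Eisenstein and cuspidal losses. The values $\eta = 57/58$ and $\theta = 285/464$ arise by equating the two error contributions, and are saturated precisely by the sharp norm bounds of Theorems \ref{thm:main} and \ref{thm:main2} combined with the explicit dependence on $p$ in the Fourier coefficient bound; the fact that $\eta$ must be chosen so close to $1$ reflects the substantially weaker cuspidal saving available in the ternary setting. Once the balancing is carried out and the local densities are estimated from below (as furnished by the lemmas quoted at the end of the introduction), one concludes $R(n) \gg r(Q_0, n) > 0$ for $n$ sufficiently large, which is the asserted existence.
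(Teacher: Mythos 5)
Your strategy is not the paper's, and it does not close. The fatal step is the union bound over primes: you need to sum the cuspidal error of $Q_p^{(i)}$ over \emph{all} primes $P<p\le\sqrt n$, and this sum is far larger than the main term. Concretely, for $m=3$ the form $Q_p^{(i)}$ has level $N\asymp p^2$ and determinant $\asymp p^2$, so Lemma \ref{lemma:m=3} gives for a single prime an error of size at least $\frac{\sqrt N}{(\det Q)^{1/6}}\cdot\frac{n^{13/28}}{N^{1/7}}\asymp n^{13/28}p^{8/21}$, which already exceeds the total count $n^{1/2+o(1)}$ as soon as $p\gg n^{21/(16\cdot 8)}$; summing over the $\sim\sqrt n/\log n$ primes up to $\sqrt n$ is hopeless, and the same computation kills the $m\ge 4$ case (there $\norm{f_p^{(i)}}\ll p^{1+\eps}$ and \eqref{eq:petersson} gives $a_p^{(i)}(n)\gg n^{3/4+\eps}$ per prime against a main term $\asymp n$). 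Even your Eisenstein estimate does not survive: the inequality $r(\operatorname{gen}Q_p^{(i)},n)\ll r(\operatorname{gen}Q_0,n)/p$ ``up to $(nN)^\eps$ factors'' is useless here, because the whole saving you are trying to exploit is $m\log\frac{1}{\eta}\approx 0.05$, which any $n^\eps$ loss swamps. No choice of $P$ on the scale $n^{\eta/2}$ with $\eta$ close to $1$ rescues this; the smoothness exponent $57/58$ cannot be reached by excluding large prime factors from the generic representation.

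The paper's argument involves no sieve and no summation over primes. One fixes three auxiliary primes $d_1,d_2,d_3\equiv 1\,(\operatorname{mod}4)$, coprime to $n$ and of size $\asymp n^{1/116}$, and shows that $q(x)=d_1^2x_1^2+d_2^2x_2^2+d_3^2x_3^2$ represents $n$: the choice of the $d_i$ forces $r(\operatorname{spn}Q,n)=r(\operatorname{gen}Q,n)\gg n^{1/2-3/116-\eps}$ as in \eqref{eq:upper}, while Lemma \ref{lemma:m=3} (fed with Theorem \ref{thm:main} and \eqref{eq:2}) bounds $|r(Q,n)-r(\operatorname{spn}Q,n)|\ll n^{13/28+8/(7\cdot 116)}+n^{7/16+13/(8\cdot116)}$, which is smaller precisely when the exponent of the $d_i$ is below $\tfrac{1}{116}$. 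Then every prime divisor of $d_ix_i$ is at most $\max(d_i,\sqrt n/d_i)\le n^{57/116}=n^{\eta/2}$ with $\eta=1-\tfrac{1}{58}$; the smoothness is automatic from the size of the variables, not sieved for. The $m=4$ case is not a parallel argument but a reduction to $m=3$: one first picks an $n^{1/4}$-smooth $x$ with $0\le n-x^2\le n^{5/8}$ and $n-x^2$ in an admissible class modulo $8$, and applies the ternary result to $n-x^2$, which is where $\theta=\tfrac58\cdot\tfrac{57}{58}=\tfrac{285}{464}$ comes from; your proposal has no mechanism producing that factor $\tfrac58$.
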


This is a slight improvement of  \cite[Theorem 2 \& 3]{BBD2009}, where $\eta = \frac{73}{74}$ and $\theta = \frac{365}{592}$. 

\textit{Notation and conventions.} We use the usual $\epsilon$-convention and all implied constants may depend on $\epsilon$.  By $[.,.], (.,.)$ we refer to the least common multiple respectively the greatest common divisor of two integers. If $p^r \mid N$, but $p^{r+1} \nmid N$, we write $p^{r} \pdiv N$.

\section{Siegel's theory and local densities}

For a holomorphic function $f$ on the upper half plane and $\gamma \in \Gamma_0(N)$ we write  
\begin{align*}
f|[\gamma]_{\frac{m}{2}}(z) :=  \begin{cases}
(cz+d)^{-\frac{m}{2}} f(\gamma z) & \text{if } m \text{ is even} \\
(\eps_d \big(\frac{c}{d}\big))^{-m}  (cz+d)^{-\frac{m}{2}} f(\gamma z) & \text{if } m \text{ is odd and } 4 \mid N ,
\end{cases} 
\end{align*} 
where $\big(\frac{c}{d}\big)$ is the extended Kronecker symbol and $\eps_d = \left(\frac{-1}{d}\right)^\frac{1}{2}$. Moreover, we denote by $S_{m/2}(N,\chi)$ the space of cusp forms of weight $\frac{m}{2} $ for $\Gamma_0(N)$ and character $\chi$. 

Two positive, quadratic forms $Q,Q'$ belong to the same class if $Q' = U^T Q U$ for $U \in \operatorname{GL}_m(\Z)$. Furthermore, they are in the same genus if they are equivalent over $\Z_p$ for all $p$, so in particular over $\Q$. There are only finitely many classes in the genus and the set 
of automorphs $o(Q) :=\{ U \in \operatorname{GL}_2(\Z) \mid U^T Q U = Q\}$ is finite. We put 
\begin{align*}
\theta(\operatorname{gen}Q,z)= \Big(\sum_{R \in \operatorname{gen}Q} \frac{1}{\#o(R)} \Big)^{-1} \sum_{R \in \operatorname{gen} Q} \frac{\theta(R,Z)}{\#o(R)} = \sum_{n} r(\operatorname{gen}Q,n) e(nz). 
\end{align*}
The level $N$ of $Q$ is defined as the smallest integer such that $NQ^{-1}$ is integral with even diagonal entries. Throughout this work, we assume that $q(x) = \frac{1}{2} x^T Qx$ is primitive. This implies that $NQ^{-1}$ has level $N$ and that 
$
\frac{1}{2} N \mid \det Q \mid 2 N^{m-1}. 
$

Recall that $\theta(Q,z) -  \theta(\operatorname{gen}Q,z) \in S_{m/2}(N,\chi)$. The space $S_{3/2}(N,\chi)$ contains a subspace $U$ generated by theta functions of the form $\sum \psi(n) n e(tn^2z)$ for some real character $\psi$ and $t \mid 4N$. Their non-vanishing Fourier coefficients are of size $\asymp n^\frac{1}{2}$ which is the order of magnitude of $r(\operatorname{gen}Q,z)$, cf.\ \eqref{eq:rgen}. Hence, for $m=3$ the main term needs to be modified.


Two quadratic forms $q_1,q_2$ with matrices $Q_1 = S^T Q_2 S$ and $S \in \operatorname{GL}_m(\Z)$ in the same genus belong to the same spinor genus if  $S \in O_{Q}(A_2) \bigcap_{p} O'_{Q_p}(A_2)GL_{m}(\Z_p)$, where $O'_{Q_p}(A)$ is the subgroup of $p$-adic automorphs $O_{Q_p}(A)$ of determinant and spinor norm 1, cf. \cite[Section 55]{Me}. We set 
\begin{align*} 
\theta(\operatorname{spn}Q,z)= \Big(\sum_{R \in \operatorname{spn}Q} \frac{1}{\#o(R)} \Big)^{-1} \sum_{R \in \operatorname{spn} Q} \frac{\theta(R,Z)}{\#o(R)}.
\end{align*}
Then, $\theta(Q,z) - \theta(\operatorname{spn}Q,z) \in U^{\bot} \subseteq S_{3/2}(N,\chi)$. For $g(z) = \sum_n b(n) e(nz) \in U^\bot $, we have by \cite[Theorem 1]{Wa2017} for 
$n=tv^2w^2$ with squarefree $t$ and $(w,N)=1$ that 
\begin{align}  \label{eq:2}
a(n) \ll \norm{g} n^{\frac{1}{4}} \Kl{1 + \frac{n^{\frac{3} {14}}}{N^{\frac 1 7}} + \frac{n^{\frac{3} {16}}}{N^{\frac{1} {16}}}+  \frac{\sqrt{v (n,N)}}{\sqrt{N}}} (nN)^{\eps}.
\end{align}
This saving in $n$ is sufficient to obtain an asymptotic formula for $r(Q,n)$, since by a local argument we can assume that $(n,N)$ is small. 

In principle, the Fourier coefficients $ r (\operatorname{spn}Q,n)$ of $\theta(\operatorname{spn}Q,z)$ can be computed locally in a similar fashion as $r(\operatorname{gen}Q,n)$. However, this process is complicated and tedious. Luckily, in many situations  $r (\operatorname{spn}Q,n)$ and $r(Q,n)$ coincide. For example, this holds if 
\begin{align} \label{eq:gen=spn}
\begin{split}
\large{\text{\textbullet}}~ &n \notin \{tm^2 \mid  4t \mid N, m \in \N\} \text{ or} \\[-3pt] 
\large{\text{\textbullet}}~ &q \simeq u_1 p^{\nu_1} x_1^2 + u_2 p^{\nu_2} x_2^2 + u_3 p^{\nu_3} x_2^2 \text{ over } \Z_p, \text{at least two } \nu_i  \text { are equal for}  \\[-3pt] &\text{odd } p \text{ and all three } \nu_i \text{ are equal for } p=2. 
\end{split}
\end{align}
To obtain upper and lower bounds for $r(\operatorname{gen}Q.n)$, we need to evaluate the $p$-adic densities given in \eqref{eq:padicdensities}. If $p \nmid 2nN$, an easy computation shows that
\begin{align} \label{eq:siegel1}
\beta_p(n,Q) = \begin{cases}
1- p^{-\frac{m}{2}}\left(\frac{(-1)^\frac{m}{2} \det Q }{p^\frac{m}{2}}\right) & \text{if } m \text{ even}, \\
1 + p^{\frac{1-m}{2}} \left(\frac{(-1)^\frac{m-1}{2} n \det Q }{p^\frac{m}{2}}\right) & \text{if } m \text{ odd},
\end{cases}
\end{align}
cf.\ \cite[Hilfssatz 12]{Si1935}. For $p \nmid 2N$, we have  by  \cite[Hilfssatz 16]{Si1935} that 
\begin{align} \label{eq:siegel2}
1- p^{-r} \leq \beta_p(n,Q) \leq 1 +  p^{r} 
\end{align}
where $r=  \frac{m}{2}$ for even $m$ and $r= \frac{1-m}{2}$ for odd $m$. For the remaining $p$-adic densities, we apply a formula of Yang \cite{Ya1998}. For odd $p$ and  $\nu_i, u_i$  defined as in \eqref{eq:padicform}, we set
\begin{align*}
V(l) = \{1 \leq i \leq  m \mid  \nu_i - l <0 \text{ is odd} \}
\end{align*}
and  
\begin{align} \label{def:d(l)v(l)}
d(l) = l +  \frac{1}{2}\sum_{\nu_i<l} (\nu_i-l), \quad v(l) = \left(\frac{-1}{p}\right)^{\lfloor \frac{\# V(l)}{2} \rfloor}\prod_{i \in V(l)} \left(\frac{u_i}{p}\right).
\end{align}
\begin{lemma} \cite[Theorem 3.1]{Ya1998}  \label{thm:Yang} For odd $p$ and $n= p^a t$ with $(t,p)=1$ we have that 
	\begin{align*}
	\beta_p(Q,n) = 1 + (1+p^{-1}) \sum_{\substack{0 < l \leq a \\ \#V(l)\, \text{even}}} v(l) p^{d(l)} + v(a+1) p^{d(a+1)} f(n)
	\end{align*}
 where
 \begin{align*}
 f(n) = \begin{cases}
 - \frac{1}{p} & \text{if } \#V(a+1) \text{ is even,} \\
 \left(\frac{t}{p}\right)\frac{1}{\sqrt{p}} & \text{if } \#V(a+1) \text{ is odd}.
 \end{cases}.
 \end{align*}
\end{lemma}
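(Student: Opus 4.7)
The plan is a direct Fourier-analytic computation of $\beta_p(n,Q)$. Starting from \eqref{eq:padicdensities} and inserting the orthogonality identity $\mathbf{1}[y \equiv 0 \bmod p^k] = p^{-k}\sum_{\xi \bmod p^k} e_{p^k}(\xi y)$, the local density becomes
\[
\beta_p(n,Q) = \lim_{k\to\infty} p^{-km} \sum_{\xi \bmod p^k} e_{p^k}(-n\xi) \prod_{i=1}^{m} \sum_{x_i \bmod p^k} e_{p^k}\!\big(\xi p^{\nu_i} u_i x_i^2\big),
\]
which uses the diagonal normal form \eqref{eq:padicform}. One then parametrises $\xi = p^{k-l} \eta$ for $l \in \{0, 1, \ldots, k\}$ with $\eta \in (\Z/p^l\Z)^\times$ for $l \geq 1$: the inner sum over $x_i$ equals $p^k$ when $\nu_i \geq l$, and otherwise factorises as $p^{k-(l-\nu_i)}$ times a classical one-variable quadratic Gauss sum on $\Z/p^{l-\nu_i}\Z$.

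The key identities are $G_r(a) = p^{r/2}$ for even $r$ and $G_r(a) = \varepsilon_p\, p^{(r-1)/2}\sqrt{p}\,\left(\tfrac{a}{p}\right)$ for odd $r$, where $\varepsilon_p^2 = \left(\tfrac{-1}{p}\right)$. Exactly the indices $i \in V(l)$ contribute a non-trivial Legendre and $\varepsilon_p$ factor, the others being pure $p$-powers. Combining the aggregated $p$-powers with the normalisation $p^{-km}$ collapses them to $p^{d(l)-l}$ with $d(l)$ as in \eqref{def:d(l)v(l)}, while the aggregated phase, using $\varepsilon_p^2 = \left(\tfrac{-1}{p}\right)$ to absorb the even part of $\varepsilon_p^{\#V(l)}$, becomes $\left(\tfrac{\eta}{p}\right)^{\#V(l)} v(l)$ times a residual $\varepsilon_p$ in the odd case. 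The outstanding outer sum $\sum_{\eta \in (\Z/p^l\Z)^\times} \left(\tfrac{\eta}{p}\right)^{\#V(l)} e_{p^l}(-n\eta)$, for $n = p^a t$, is then a Ramanujan sum when $\#V(l)$ is even and a character Gauss sum when $\#V(l)$ is odd. The former equals $\varphi(p^l)$ for $l \leq a$, $-p^{a}$ for $l = a+1$ and $0$ otherwise, producing the main interior sum over $1 \leq l \leq a$ together with the $-1/p$ case of $f(n)$; the latter vanishes except at $l = a+1$, where the classical evaluation combines with the residual $\varepsilon_p$ to produce the $\left(\tfrac{t}{p}\right)/\sqrt{p}$ case of $f(n)$. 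Together with the trivial $l = 0$ contribution of $1$, these assemble into the stated formula.

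The principal obstacle is purely bookkeeping: one has to track all $\varepsilon_p$-phases and $p$-power exponents across the $i = 1, \ldots, m$ variables so that they aggregate exactly to the $\left(\tfrac{-1}{p}\right)^{\lfloor \#V(l)/2\rfloor}$ hidden inside $v(l)$ and to the single exponent $d(l)$, and one has to verify the cancellations and prefactor normalisations that turn the Ramanujan- and Gauss-sum evaluations into the compact form appearing in the statement. No individual step is conceptually difficult, but the case split by parity of $\#V(l)$ and by the range of $l$ relative to $a$ makes the accounting delicate.
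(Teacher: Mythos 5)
The paper offers no proof of this lemma: it is quoted directly from Yang \cite[Theorem 3.1]{Ya1998}, so the only ``internal proof'' is the citation. Your Fourier-analytic derivation is the standard (indeed essentially Yang's own) route: orthogonality turns $\beta_p(n,Q)$ into a complete exponential sum, the diagonalisation \eqref{eq:padicform} factors the inner sum into one-variable quadratic Gauss sums, and the outer sum over $\xi=p^{k-l}\eta$ becomes a Ramanujan sum or a character Gauss sum according to the parity of $\#V(l)$. The bookkeeping you defer does close up: the $p$-powers collapse to $p^{d(l)-l}$ exactly because $d(l)=l+\tfrac12\sum_{\nu_i<l}(\nu_i-l)$; the $\varepsilon_p$-factors assemble into $\legendre{-1}{p}^{\lfloor \#V(l)/2\rfloor}$, with one residual $\varepsilon_p$ in the odd case that combines with the $\varepsilon_p\legendre{-1}{p}$ coming from $\sum_{\eta}\legendre{\eta}{p}e_{p^{a+1}}(-n\eta)$ to give exactly $\legendre{t}{p}p^{-1/2}$; and the vanishing for $l>a+1$ makes the limit over $k$ stabilise. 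So the outline is correct and complete.

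One discrepancy you should notice: your computation yields the interior coefficient $\varphi(p^l)\,p^{d(l)-l}=(1-p^{-1})\,p^{d(l)}$, i.e.\ the factor $1-p^{-1}$, while the lemma as printed has $1+p^{-1}$. The printed statement is the one at fault (a transcription slip from Yang, whose Theorem 3.1 has $1-p^{-1}$): for instance, for $q=x_1^2+x_2^2$, $p\equiv 1\pmod 4$ and $n=pt$ with $p\nmid t$, a direct count gives $\beta_p(Q,n)=2(1-p^{-1})$, which only the $(1-p^{-1})$ version reproduces. The slip is harmless for the rest of the paper, since Lemma \ref{thm:Yang} is only applied either with $p\nmid n$ (where the interior sum is empty) or through the crude bounds $|v(l)|=1$ and $d(l)\le b/2$, but your proof correctly exposes it.
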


To compute the densities for odd $p \mid N, p\nmid n$, we make a case distinction according to $V(1) =  \#\{i \mid \nu_i  =0 \}$. If $V(2) \geq 2$ it follows by the theorem above that 
\begin{align} \label{cor:Yang}
1 - \frac{1}{p} \leq \beta_p(Q,n) \leq 1 + \frac{1}{p}.
\end{align}
For $V(1) =1$, we obtain $\beta_p(Q,n) = 1 + \Big( \frac{n}{p} \Big) \prod_{i \in V(1)} \Big(\frac{u_i}{p} \Big)$. 

For the computation of the 2-adic densities Yang provides a slightly more complex formula \cite[Theorem 4.1]{Ya1998}. As a consequence, we obtain the following upper bound: 

\begin{kor}\label{cor:theta} It holds that 
	\begin{align*}
	\prod_p \beta_p(Q,n) \ll (n,N)^{\frac{1}{2}} (nN)^\eps.
	\end{align*}
\end{kor}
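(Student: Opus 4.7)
The plan is to estimate $\beta_p(Q,n)$ prime by prime via Yang's formula (Lemma \ref{thm:Yang}) and its 2-adic analogue, then multiply. I would split the primes into four ranges: $p \nmid 2nN$; $p \mid n$ with $p \nmid 2N$; odd $p \mid N$ with $p \nmid n$; and odd $p \mid (n,N)$; with $p = 2$ handled separately.

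For $p \nmid 2nN$, formula \eqref{eq:siegel1} gives $\beta_p = 1 + O(p^{-(m-1)/2})$, so the infinite product is bounded by a convergent $L$-type Euler product and is absorbed into $(nN)^{\eps}$. For $p \mid n$ with $p \nmid 2N$, the form $Q$ is unimodular over $\Z_p$, so $\nu_i(p) = 0$ for every $i$; Lemma \ref{thm:Yang} then gives $d(l) = l(2-m)/2 \le -l/2$ for $m \ge 3$, so each $\beta_p = O(1)$ and the total contribution is $n^{\eps}$. For odd $p \mid N$ with $p \nmid n$, the case $V(2) \ge 2$ is already in \eqref{cor:Yang}, the case $V(1) = 1$ is immediate from the formula stated right after it, and the remaining subcase follows from Lemma \ref{thm:Yang} with $a = 0$, where $d(1) \le 0$ yields $\beta_p = O(1)$; the product over these primes contributes $N^{\eps}$.

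The main step is the range $p \mid (n,N)$. Write $n = p^a t$ with $(t,p) = 1$ and sort $0 = \nu_1 \le \dots \le \nu_m = v_p(N)$, using primitivity of $Q$ to force $\nu_1 = 0$. Setting $s(l) = \#\{i : \nu_i < l\}$ and using $\sum_{\nu_i < l}\nu_i \le (s(l)-1)(l-1)$, I rewrite
\[
d(l) = l\Bigl(1 - \tfrac{s(l)}{2}\Bigr) + \tfrac{1}{2}\sum_{\nu_i < l}\nu_i \le \tfrac{l - s(l) + 1}{2} \le \tfrac{l}{2}.
\]
For $l > v_p(N)$ (where $s(l) = m$), the trivial bound $\sum_i \nu_i \le (m-1)v_p(N)$ yields $d(l) \le (v_p(N) - (m-2))/2 < v_p(N)/2$. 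Hence $d(l) \le \tfrac{1}{2}\min(l, v_p(N))$ uniformly, with an additional $1/\sqrt{p}$ saved off the $l = a+1$ contribution by the factor $f(n)$ in Yang's formula. Summing over $l \le a+1$ therefore gives $\beta_p \ll p^{\min(a, v_p(N))/2}(nN)^{\eps}$, and multiplying over the finitely many $p \mid (n,N)$ yields $(n,N)^{1/2}(nN)^{\eps}$.

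The prime $p = 2$ is handled analogously using the 2-adic version of Yang's formula \cite[Theorem 4.1]{Ya1998}. The main obstacle lies here: the hyperbolic blocks $x_{2i-1}^2 + x_{2i-1}x_{2i} + x_{2i}^2$ and $x_{2i-1}x_{2i}$ in \eqref{eq:2adicform} introduce additional subcases and modified exponents, but a routine if lengthy bookkeeping shows that the bound $d(l) \le \tfrac{1}{2}\min(l, v_2(N))$ persists in every Jordan type. The factor $2^{\min(m-2r_2,\,s)}$ appearing in $F(Q,s)$ in \eqref{eq:F(Q,s)} is precisely the fingerprint of these hyperbolic contributions, and its size is exactly what the above bound predicts.
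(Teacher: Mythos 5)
Your argument is correct and follows essentially the same route as the paper: reduce to the primes dividing $2(n,N)$ via \eqref{eq:siegel1}, \eqref{eq:siegel2} and \eqref{cor:Yang}, then bound $\beta_p(Q,n)$ through Yang's formula by showing $d(l)\le \tfrac12\min(l,v_p(N))$ (the paper states this as $d(l)\le b/2$ for $(n,N)=p^b$ without the derivation you supply from primitivity), with the $2$-adic case deferred to \cite[Theorem 4.1]{Ya1998} in both treatments. Your explicit computation of $d(l)$ and the remark about the $f(n)$ factor saving $1/\sqrt{p}$ at $l=a+1$ fill in details the paper leaves implicit, but the underlying argument is identical.
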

\begin{proof}
By \eqref{eq:siegel1}, \eqref{eq:siegel2} and \eqref{cor:Yang} we already know that 
\begin{align*}
\prod_p \beta_p(Q,n) \ll  (nN)^\eps \prod_{p \mid 2 (n,N)} \beta_p(Q,n).
\end{align*} 
Let $(n,N) = p^b$. Then, $d(l) \leq \frac{b}{2}$ for all $l$. Hence, Lemma \ref{thm:Yang} gives for odd $p$ that $\beta_p(Q,n) \ll p^{\frac{b}{2}}$ as $\abs{v(l)}=1$. For $p=2$ we apply \cite[Theorem 4.1]{Ya1998}. The term $d(k)$, defined in \cite[(4.3)]{Ya1998},  satisfies $d(k) \leq  2^{\frac{b+1}{2}}$ which implies that \cite[(4.4)]{Ya1998} is bounded by $\ll 2^{\frac{b}{2}}$ and hence, also $\beta_2(n,Q)$.
\end{proof}

As a consequence, we obtain by \eqref{eq:rgen} that
\begin{align} \label{eq:genusupperbound}
r(\operatorname{gen} Q,n) \ll \frac{n^{\frac{m}{2}-1}(n,N)^{\frac{1}{2}}}{\sqrt{\det Q}} (nN)^\eps. 
\end{align}

To obtain lower bounds for $\beta_p(Q,n)$ we follow the approach of Hanke \cite{Ha2004}. For $q \simeq  u_1 p^{\nu_1} x_1^2 + \ldots + u_m p^{\nu_m} x_m^2 $ in the form \eqref{eq:padicform} and a primitive solution  $x \in (\Z_p)^m$ of $q(x) = n$, we set 
\begin{align*}
\nu(x) = \min \{ \nu_i \mid x_i \in \Z_p^\times\}.
\end{align*}
We say that $x$ is of \textit{good} type if $\nu(x)=0$, of bad type I if $\nu(x)=1$ and of bad type~II if $\nu(x) \ge 2$. If there is a solution of \text{good} type, we count solutions modulo $p$ and apply Hensel's lemma to compute $\beta_p(q,n)$. As a result, we obtain $\beta_p(q,n) \geq 1- \frac{1}{p}$ and $\beta_2(q,n) \geq \frac{1}{4}$. 

In all other cases, we apply reduction maps. Therefore, let $q \equiv q_0 + p q_1 + p^2 q_2$, where $q_0$ consist of all diagonal terms with $\nu_i=0$ and $q_1$ of those with $\nu_i=1$. If there are no good solutions, but at least one of bad type I, we reduce modulo $p$ which gives $\beta_p(q,n) \geq p^{1-\dim q_0} \beta_p(q',\frac{n}{p})$ for $q' = p q_0  + q_1 + p q_2$. If there are only bad type~II solutions, we reduce modulo $p^2$ which gives $\beta_p(q,n) \geq p^{2-\dim q_0-\dim q_1} \beta_p(q'',\frac{n}{p^2})$ for $q'' = q_0 + p q_1 + q_2$. As a consequence, we obtain:

\begin{lemma} \label{lemma:lowerbound} Assume there is a primitive solution $q(x) = n$ over $\Z_p$ with $\nu := \nu(x)$ as above and $m \geq 4$. Then, 
\begin{align*}
r_p(q,n) \geq 1-\frac{1}{p} \quad \text{if } p \equiv 1 \,(\operatorname{mod} 4).
\end{align*}
Furthermore, we have for $m=4$ that 
\begin{align*}
r_p(q,n) \geq \frac{1- \frac{1}{p}}{\sqrt{(p^{\nu},n)}} \text{ for } p \equiv 3\,( \operatorname{mod} 4) \text{ and }  r_2(q,n) \geq  \frac{1}{32 \cdot \sqrt{(2^{\nu},n)}}
\end{align*}	
and for $m \geq 5$ that 
\begin{align*}
r_p(q,n) \geq \frac{1- \frac{1}{p}}{(p^{\nu},n)} \text{ for } p \equiv 3 \,(\operatorname{mod} 4) \text{ and } r_2(q,n) \geq  \frac{1}{32 \cdot (2^{\nu},n)}.
\end{align*}
\end{lemma}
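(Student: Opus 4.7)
The plan is to proceed by induction on $\nu := \nu(x)$, applying the reduction maps introduced just before the lemma and terminating with Hensel's lemma at the good case. The base case $\nu = 0$ was essentially dealt with in the preceding discussion: I would count primitive mod-$p$ solutions of $q = n$ directly, observing that the gradient is non-vanishing at a good solution, and then invoke Hensel to lift. This yields $\beta_p(q,n) \geq 1 - 1/p$ for odd $p$ and $\beta_2(q,n) \geq 1/4$ after lifting modulo $2^3$; since $(p^0, n) = 1$, both match the claimed right-hand sides.

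For the inductive step with $\nu \geq 1$, I first check whether a bad-type-I solution exists. If so, I apply the type-I reduction $\beta_p(q,n) \geq p^{1-\dim q_0}\,\beta_p(q', n/p)$ and pass to $(q', n/p)$, where the shifted solution $x'$ has either dropped to good type or at least strictly smaller $\nu$-index. Otherwise (only bad-II solutions), I apply the type-II reduction $\beta_p(q,n) \geq p^{2-\dim q_0-\dim q_1}\,\beta_p(q'', n/p^2)$, and the new $\nu$-index drops by two. In either case the induction hypothesis applied to the reduced pair gives a bound for $\beta_p(q', n/p)$ or $\beta_p(q'', n/p^2)$, and the overall estimate follows by multiplying through the explicit loss factor.

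To control the accumulated loss I would exploit the dimensional constraints coming from the absence of good solutions. Since a nondegenerate quadratic form in $\geq 3$ variables over $\mathbb{F}_p$ is always isotropic, anisotropy of $q_0$ forces $\dim q_0 \leq 2$; similarly in the bad-II regime one has $\dim q_0 + \dim q_1 \leq m - 1$ because a variable of index $\geq 2$ must be present. For $p \equiv 1 \pmod 4$, the identity that $-1$ is a square mod $p$ rules out anisotropic binary forms and forces $\dim q_0 \leq 1$, so the loss factor is $\geq 1$ at each step; the recursion then bottoms out at $1 - 1/p$ with no accumulated loss, proving the first bullet. For $p \equiv 3 \pmod 4$ (or $p = 2$), the worst-case loss of a factor $p$ per step is amortized over $\nu$ reductions to yield the denominator $(p^\nu, n)$ in the $m \geq 5$ regime, while a more delicate joint analysis of type-I/type-II reductions produces the sharper denominator $\sqrt{(p^\nu, n)}$ when $m = 4$.

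The principal obstacle is the bookkeeping of these losses when $m = 4$ and $p \equiv 3 \pmod 4$, where the square-root saving requires exploiting that each type-II step consumes two units of $\nu$ against only one power of $p$ of loss, so the sequence of reductions must be analysed jointly rather than one step at a time. The 2-adic case then uses the same induction, but I will have to accommodate the richer classification \eqref{eq:2adicform} (including the hyperbolic and anisotropic binary blocks) and replace the Hensel base case by lifting modulo $2^3$, which is what produces the absolute constant $1/32$ in the final bound.
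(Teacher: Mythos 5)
Your overall skeleton --- iterate the type-I/type-II reduction maps and close with Hensel's lemma at a good solution --- is the same engine the paper relies on; the paper itself essentially delegates the $m=4$ case to \cite[Lemma 2]{Ro2019}, declares $m\geq 5$ analogous, and adds only two structural observations. But two of your steps, and precisely the two that carry the arithmetic content, do not work as written. First, your justification of the $p\equiv 1\,(\operatorname{mod}4)$ bound is false: the fact that $-1$ is a square modulo $p$ does \emph{not} rule out anisotropic binary forms over $\F_p$. The norm form of $\F_{p^2}/\F_p$ is anisotropic for every odd $p$; for $p\equiv 1\,(\operatorname{mod}4)$ it is $x_1^2+ux_2^2$ with $u$ a non-residue (and then $-u$ is again a non-residue). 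Hence $\dim q_0\le 1$ does not follow, and your claim that each reduction step has loss factor $\ge 1$ collapses. The paper's observation at this point is of a different nature: it asserts that when $p\equiv 1\,(\operatorname{mod}4)$ and $\dim q_0\ge 2$ a \emph{good} solution exists, so that no reduction step is performed at all; this uses isotropy of the particular binary form at hand (for instance $x_1^2+x_2^2$ when $p\equiv 1\,(\operatorname{mod}4)$, as in the sum-of-squares applications), not a blanket nonexistence of anisotropic binaries.

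Second, the square-root saving for $m=4$, $p\equiv 3\,(\operatorname{mod}4)$ cannot be extracted from the amortization you describe. Your accounting handles type-II steps (loss $p^{\dim q_0+\dim q_1-2}\le p$ against two powers of $p$ removed from $n$), but a type-I step with $\dim q_0=2$ and $q_0$ anisotropic loses a full factor $p$ while removing only one power of $p$; iterating this yields exactly the rate $(p^{\nu},n)^{-1}$, not $(p^{\nu},n)^{-1/2}$. The paper's proof rests on a specific structural fact that your proposal does not identify: for $q\simeq u_1x_1^2+u_2x_2^2+pu_3x_3^2+pu_4x_4^2$ with $p\equiv 3\,(\operatorname{mod}4)$, every solution of $q\equiv 0\,(\operatorname{mod}p^2)$ satisfies $x\equiv 0\,(\operatorname{mod}p)$ (both residual binaries are anisotropic), so no primitive bad-type solution exists and the dangerous type-I configuration is excluded by the hypothesis of the lemma rather than controlled by bookkeeping. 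Without this input, or an equivalent case analysis, the claimed denominator $\sqrt{(p^{\nu},n)}$ is not reachable; the same remark applies at $p=2$, where the richer splitting into hyperbolic and anisotropic blocks again forces a case-by-case treatment (which the paper, too, outsources to \cite{Ro2019}).
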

\noindent
\textbf{Remark}. It seems counterintuitive that the bounds for $m=4$ are stronger. One reason is  the assumption that $n$ is primitively locally represented which eliminates the worst case scenario for the lower bound, i.e.\ that $q \sim  x_1^2 + x_2^2 + p x_3^2 + p x_4^2$ and $p \equiv 3 \,(\operatorname{mod}4)$. 
\begin{proof} The case $m=4$ is essentially proven in \cite[Lemma 2]{Ro2019}. If $p \equiv 1\, (\operatorname{mod} 4)$ and $\dim q_0 \geq 2$,  there is always a good solution. Moreover, a bad type solution cannot exist for $q \simeq u_1 x_1^2 + u_2 x_2^2 + u_3 p x_3^2 + u_4 p x_4$ if $p \equiv 3 \,(\operatorname{mod}4)$.  Indeed, all solutions of $x_1^2 + x_2^2 + p x_3^2 + p x_4^2 \equiv 0 \,(\operatorname{mod} p^2)$ satisfy $x \equiv 0 \,(\operatorname{mod}p)$ for $p \equiv 3 \,(\operatorname{mod}4)$. The cases $m \geq 5$ work analogously to $m=4$.  
\end{proof}

Since $\nu \leq v_p(N)$, it follows for  $m=4$  that $\prod_p \beta_p(q,n) \gg (n,N)^{-\frac{1}{2}} (nN)^{-\eps}$ if $n$ is primitively represented. A form in 5 variables is isotropic over $\Z_p$ and hence if we sort $\nu_i$ by value, there is always a primitive solution $x$ such that $\nu(x) \le \nu_5$. Hence, we obtain for $m \geq 5$ that
\begin{align*}
\prod_p \beta_p(q,n) \gg  \prod_{p \mid N} \big( p^{\frac{v_p(\det Q)}{m-4}}, p^{v_p(N)},n\big) ^{-1}(nN)^{-\eps}.  
\end{align*}
For a given form, it is often possible to find a primitive solution with $\nu=0$ for all $p \mid (n,N)$ which yields $\prod_p \beta_p(q,n) \gg  (nN)^{-\eps}$. 



	


\section{A uniform bound for the inner product}

The aim of this section is to prove Theorem \ref{thm:main}, \ref{thm:main2} and \ref{thm:3}. Recall that the Siegel domain $\mathcal{S}(\delta,\eta)$ consists of all positive matrices $M$ that allow for a decomposition into 
\begin{align*}
M = V^T D V, 
\end{align*}
where $V$ is an upper triangular matrix with ones on the diagonal and all other entries $\abs{v_{i,j}} \leq \eta$ and $D$ is a diagonal matrix with entries  $a_1, \ldots, a_{m} $ that satisfy $0 < a_i \leq \delta a_{i+1}$. 

Let $Q$ denote a positive, integral matrix with even diagonal entries. By \cite[p.\,259]{Ca87}, the class of $Q$ contains an element $Q' \in \mathcal{S}(\frac{4}{3},\frac{1}{2})$. We decompose  $Q'=V^T DV$ as above and call $a_1,\ldots,a_m$ the \textit{associated diagonal entries} of $Q$. Since $a_1$ equals the upper left entry of $Q'$ it is integral and thus, $a_1 \geq 1$. This construction of $a_1,\ldots,a_m$ with respect to $Q$ is not unique. If we assume that $Q'$ is Hermite-reduced, then $a_i$ is of the size as the $i$-th successive minimum of $Q$, cf.\ \cite[\S12,\,Theorem 3.1]{Ca87}.

\begin{lemma} \label{eq:thetaseriesl} Let $Q$ correspond to a positive, integral quadratic form of level $N$ with associated diagonal entries $a_1,\ldots,a_m$. Then,  
\[a_1 \cdots a_m = \det Q \quad \text{and} \quad \Big(\frac{3}{4}\Big)^{i-1} \le a_i \leq \Big(\frac{4}{3}\Big)^{m-i} N \text{ for every } 1 \le i \le m.\] Furthermore, it holds that 
\begin{align*} 
\sum_{x \leq l} r(Q,x)^2 \ll \Big(1 + \frac{l^\frac{1}{2}}{\sqrt{a_1}} + \frac{l}{\sqrt{a_1 a_2}} + \ldots  + \frac{l^{m-1}}{\sqrt{a_1 a_2} a_3 \cdots a_m} \Big) l^\eps.
\end{align*}  
More precisely, for $2 \le j \le 2m-2$ the  $j$-th term is given by $\frac{l^{j/2}}{ A_j}$ where
\begin{align*}
A_j = \prod_{i=1}^{\lfloor \frac{j+3}{2} \rfloor} \sqrt{a_i} \prod_{k=3}^{\lfloor \frac{j+2}{2} \rfloor} \sqrt{a_k} = \det Q \Big(\sqrt{a_1 a_2} \prod_{i= \lfloor \frac{j+5}{2} \rfloor}^{m} \sqrt{a_i} \prod_{k=\lfloor \frac{j+4}{2} \rfloor}^{m} \sqrt{a_k}\Big)^{-1}.
\end{align*}
Moreover, we have 
\begin{align*} 
\sum_{x \leq l} r(NQ^{-1},x)^2 \ll  \Big(1 + \frac{ l^{\frac{1}{2}}\sqrt{a_{m}}}{\sqrt{N}} + \sum_{j=2}^{2m-2}  \frac{l^\frac{j}{2} \tilde{A}_j }{N^\frac{j}{2} } \Big) l^\eps, 
\end{align*}
where
\begin{align*}
\tilde{A}_j \! = \! \! \prod_{i=m-\lfloor \frac{j+1}{2} \rfloor}^{m} \! \sqrt{a_i} \prod_{k=m-\lfloor \frac{j}{2} \rfloor}^{m-2} \sqrt{a_k}.
\end{align*}
\end{lemma}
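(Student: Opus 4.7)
My plan is to prove the three statements in order: the structural claims on the $a_i$, the main second-moment estimate for $Q$, and the dual estimate for $NQ^{-1}$.

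\emph{Structural bounds.} The identity $a_1\cdots a_m=\det Q$ is immediate from $Q'=V^T D V$ with $\det V=1$. The lower bound $a_i\ge(3/4)^{i-1}$ follows by iterating the Siegel inequality $a_i\le(4/3)a_{i+1}$ downward from $a_1\ge 1$, since the $(1,1)$-entry of the Hermite-reduced $Q'$ is a positive integer. The upper bound $a_i\le(4/3)^{m-i}N$ reduces to $a_m\ll N$, which follows from the integrality of $NQ^{-1}$: its first successive minimum is at least $1$, and by the standard duality between the successive minima of a positive-definite form and of its inverse this minimum is comparable to $N/a_m$.

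\emph{Main bound.} I would start from
\[
\sum_{x\le l}r(Q,x)^2=\#\bigl\{(v,w)\in\Z^{2m}\colon q(v)=q(w)\le l\bigr\}
\]
and substitute $s=v+w$, $u=v-w$: the equality $q(v)=q(w)$ becomes the $Q$-orthogonality $s^{T}Qu=0$, and $q(v),q(w)\le l$ becomes $q(s)+q(u)\le 4l$. Condition on $u$. The term $u=0$ gives the standard Minkowski count $N(Q,4l)\ll\prod_i\max(1,\sqrt{l/a_i})\cdot l^\eps$, which expanded produces the ``diagonal'' contributions $l^{k/2}/\sqrt{a_1\cdots a_k}$ for $0\le k\le m$. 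For $u\ne 0$ the vector $s$ is confined to the rank-$(m-1)$ sublattice $\Lambda_u=u^{\perp_Q}\cap\Z^m$ with $q(s)\le 4l-q(u)$. The successive minima $b_1\le\cdots\le b_{m-1}$ of the induced form on $\Lambda_u$ interlace with the $a_i$ by the standard interlacing inequalities for successive minima of sublattices, and by Minkowski's second theorem applied to $\Z u\oplus\Lambda_u$ their product is comparable to $\det Q/q(u)$ times a primitivity index. Summing dyadically over $u$ in shells $q(u)\asymp U$ with $1\le U\ll l$, and applying the lattice-point bound on $\Lambda_u$ in each shell while paying attention to which top or sub-top terms dominate (for $u$ of large norm the residual $L=4l-q(u)$ can be smaller than some $b_i$), produces the remaining contributions $l^{j/2}/A_j$ in the stated expression.

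\emph{Dual bound.} The estimate for $NQ^{-1}$ follows by applying the bound just proven to $NQ^{-1}$ and using the duality $\tilde a_i\asymp N/a_{m+1-i}$ of successive minima between a positive-definite form and its inverse, which converts the $a_i$-factors of $A_j$ into the $a_{m+1-i}$-factors of $\tilde A_j$.

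The main obstacle is the combinatorial bookkeeping in the second step: one must carefully match the dyadic $u$-sum, the interlacing pattern of the successive minima of $\Lambda_u$ with the $a_i$, and the transition between top and sub-top terms of the lattice-point count on $\Lambda_u$, to the precise explicit formula for $A_j$, including verifying the exponents $\lfloor(j+3)/2\rfloor$ and $\lfloor(j+2)/2\rfloor$ and the square-root on $a_1,a_2$.
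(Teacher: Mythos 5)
Your structural bounds and the reduction of the dual estimate are fine in outline (though the paper gets $a_m\le N$ more directly: $N/a_m$ is the lower-right entry of the integral matrix $NQ^{-1}$, hence a positive integer). The problem is the main second-moment bound, where you take a genuinely different route — writing $\sum_{x\le l}r(Q,x)^2$ as a pair count, substituting $s=v+w$, $u=v-w$, and counting lattice points on the hyperplanes $s^TQu=0$ — and then stop exactly at the step that would constitute the proof. The entire content of the lemma is the explicit shape of $A_j$, and you defer "the combinatorial bookkeeping" of matching the dyadic $u$-sum and the interlacing of the minima of $\Lambda_u$ to that formula. As written this is a plan, not a proof, and I see no reason the plan reproduces the specific asymmetric structure $A_j=\prod_{i\le\lfloor(j+3)/2\rfloor}\sqrt{a_i}\,\prod_{3\le k\le\lfloor(j+2)/2\rfloor}\sqrt{a_k}$. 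There is also a concrete error en route: for primitive $u$ the determinant of the induced form on $\Lambda_u=u^{\perp_Q}\cap\Z^m$ is $\det Q\cdot 2q(u)/g^2$ with $g$ the content of $Qu$ (covolume of a hyperplane section times covolume of the orthogonal projection equals covolume of the lattice), not "$\det Q/q(u)$ times a primitivity index"; this changes the $u$-sum you would have to perform.

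The paper's argument is far simpler and explains where $A_j$ comes from. One bounds $\sum_{x\le l}r(Q,x)^2\le\bigl(\max_{x\le l}r(Q,x)\bigr)\sum_{x\le l}r(Q,x)$ and establishes the two factors separately from the decomposition $q(x)=\sum_i\frac{a_i}{2}(x_i+\cdots)^2$. For the sup norm, fixing $x_3,\dots,x_m$ (at most $\prod_{i\ge3}(2\sqrt{n/a_i}+1)$ choices) leaves a \emph{binary} form with only $O(n^\eps)$ representations of $n$, so $r(Q,n)\ll(1+n^{1/2}/\sqrt{a_3}+\cdots+n^{m/2-1}/\sqrt{a_3\cdots a_m})n^\eps$ — the two smallest entries $a_1,a_2$ do not appear. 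For the first moment one counts all $m$ variables and gets $1+\max_j l^{j/2}/\sqrt{a_1\cdots a_j}$. Multiplying and optimizing over the split $j=p+q$ (balancing $p\approx q+2$ because the $a_i$ are essentially increasing) gives precisely the indices $\lfloor(j+3)/2\rfloor$ and $\lfloor(j+2)/2\rfloor$, i.e.\ the stated $A_j$; the second product starting at $k=3$ records exactly the fact that the sup-norm bound omits $a_1,a_2$. The same scheme applied to $NQ^{-1}=V^{-1}(ND^{-1})V^{-T}$, whose diagonal entries $N/a_i$ are decreasing so that the binary form is taken in $x_{m-1},x_m$, yields $\tilde A_j$. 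If you want to salvage your approach, you would need to carry out the hyperplane counting in full and show it does not lose against this elementary bound; otherwise I would recommend switching to the $\max\times\sum$ argument.
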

\begin{proof}
Since $r(Q,n) = r(U^T Q U, n)$ for any $U \in \operatorname{GL}_{m}(\Z)$, we may assume that $Q \in \mathcal{S}(\frac{4}{3},\frac{1}{2})$. If we decompose $Q=V^T D V$ with $V$ and $D$ as above, we obtain that
\begin{align*}
q(x) = \frac{1}{2} x^T Q x = \frac{a_1}{2} (x_1+ v_{1,2} x_2 + \ldots + v_{1,m} x_{m})^2 + \dots + \frac{a_{m}}{2} x_{m}^2.
\end{align*}
We count the number of solutions of $q(x) =n$. For $x_3,\ldots,x_{m}$ there are at most 
\begin{align*}
\left(2 \sqrt{\frac{n}{a_3}}+1\right) \cdots \left(2 \sqrt{\frac{n}{a_{m}}}+1\right)
\end{align*}
choices. 

After that, we are left with a binary quadratic form that has $\mathcal{O}(n^{\eps})$ solutions uniformly in all parameters. This yields 
\begin{align*} 
r(Q,n) \ll_m \left(1 + \frac{n^{1/2}}{\sqrt{a_3}}+ \ldots + \frac{n^{\frac{m}{2}-1}}{\sqrt{a_3\cdots a_{m}}} \right) n^{\epsilon}.
\end{align*}
If we want to count all solutions of $q(x) \leq l$, we proceed as before, but bound the number of solutions for $x_1$ and $x_2$ by  
$
\big(2\sqrt{\frac{l}{a_1}} +1 \big) \text{ and } \big(2\sqrt{\frac{l}{a_2}}+1\big). 
$
Thus, we obtain 
\begin{align*}
\sum_{x \leq l} r(Q,x) \ll 1 + \max_{1 \leq j \leq m} \frac{l^{\frac{j}{2}}}{ \sqrt{a_1} \ldots \sqrt{a_j}}.
\end{align*}
The first claim now follows from these two bounds and $a_1 \cdots a_m = \det Q$. 

Next, we consider the quadratic form corresponding to $NQ^{-1}$. Note that
\begin{align*}
N Q^{-1} = N (V^T D V)^{-1} = V^{-1} (N D^{-1}) V^{-T}. 
\end{align*}
The diagonal entries of $ND^{-1}$ are 
\begin{align*}
\frac{N}{a_1}, \ldots, \frac{N}{a_{m}} \quad \text{with} \quad \frac{N}{a_i} \ge \frac{3}{4}  \frac{N}{a_{i+1}}. 
\end{align*}
The lowest diagonal entry $\frac{N}{a_{m}}$ of $D$ equals the lower right entries of $N Q^{-1}$ since $V^{-1}$ is an upper and $V^{-T}$ a lower diagonal matrix with ones on the diagonal. Hence, $N/a_{m} \in \Z$ and thus $a_m \leq N$. Together with $a_i \geq 1$ and $a_i \leq 4 a_{i+1} / 3$, this gives the bounds for $a_i$ claimed in the lemma. 

Moreover, it holds that 
\begin{align*}
\tilde{q}(x) &:= \frac{1}{2} x^T N Q^{-1} x =\frac{1}{2} x^T  V^{-1} (N D^{-1}) V^{-T}  x \\ &= \frac{N}{2 a_1} x_1^2 + \ldots + \frac{N}{2 a_m} ( w_{1,m} x_1 + \ldots + w_{m-1,m} x_{m-1}^2 + x_m)^2,
\end{align*}
where $w_{ij}$ are the entries of $V^{-1}$. Their absolute value $|w_{ij}|$ is bounded from above by a constant. To see this, write  $V= I + M$ for the $m\times m$ identity $I$ and a nilpotent matrix $M$. Then, a simple calculation shows that $V^{-1} = (I+M)^{-1} = I + \sum_{k=1}^{m-1} (-1)^k M^k$. 

Next, we count solutions of $\tilde{q}(x) =n$. For $x_1,\ldots x_{m-2}$ there are \[\Big(2 \sqrt{n\cdot \frac{ a_1}{N}}+1\Big) \cdots \Big(2 \sqrt{n \cdot \frac{ a_{m-2}}{N}}+1 \Big)\] choices and $\mathcal{O}(n^\eps)$ for $x_{m-1} $ and $x_m$. It follows that
\begin{align} \label{eq:qf}
r(NQ^{-1},n) \ll \left(1 + \frac{n^{1/2} \sqrt{a_{m-2}}}{\sqrt{N}}+ \ldots + \frac{n^{\frac{m}{2}-1}\sqrt{a_1 \cdots a_{m-2} }}{N^{\frac{m}{2}-1}} \right) n^{\epsilon}.
\end{align}
Moreover, we obtain by the same argument as above  that
\begin{align*}
\sum_{x \leq l} r(NQ^{-1},x) \ll 1 + \max_{1 \leq j \leq m} l^{\frac{j}{2}} \cdot \frac{ \sqrt{a_{m-j+1}} \ldots \sqrt{a_m}}{N^{\frac{j}{2}}}.
\end{align*}
Combining these bounds yields the second claim. 
\end{proof}

The level of a form is easily determinable by the local splittings: 

\begin{lemma} \label{lemma:help} Let $q= \frac{1}{2}x^T Q x$ be equivalent over $\Z_p$ to \eqref{eq:padicform} for odd $p$ and to \eqref{eq:2adicform} for $p=2$. Set 
\begin{align*}
\nu(p) = \max_{1 \leq i \leq m} \nu_i(p) \text{ for odd } p  \text{ and }   \nu(2) = \max_{1 \leq i \leq m} ( \nu_i(2) + 2 \delta_{ i  >  2 r_2}).
\end{align*}
Then, the level $N$ of $Q$ is given by $N = \prod_{p} p^{\nu(p)}$. 
\end{lemma}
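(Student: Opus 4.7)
Since the level $N$ is defined by an integrality condition on $NQ^{-1}$ (plus parity of the diagonal), which splits one prime at a time, the plan is to show that $v_p(N) = \nu(p)$ on the standard local model at every $p$. Writing $N = \prod_p p^{e_p}$, the exponent $e_p$ is the smallest $k \geq 0$ such that $p^k Q^{-1} \in \operatorname{Mat}_m(\Z_p)$ with $p$-adically even diagonal. First I would check that $e_p$ is a $\Z_p$-equivalence invariant: if $\tilde Q = U^T Q U$ with $U \in \operatorname{GL}_m(\Z_p)$, then $\tilde Q^{-1} = U^{-1} Q^{-1} U^{-T}$, and expanding a diagonal entry as $\sum_j w_{ij}^2 A_{jj} + 2 \sum_{j<k} w_{ij} w_{ik} A_{jk}$ shows that both integrality and the even-diagonal property are preserved. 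Hence it suffices to take $Q$ in the normal form \eqref{eq:padicform} or \eqref{eq:2adicform}.

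For odd $p$ this is immediate: $Q$ becomes $\operatorname{diag}(2 p^{\nu_i(p)} u_i)$, the factor $2$ and the $u_i$ are $p$-adic units, and the even-diagonal requirement is vacuous. The $i$-th diagonal entry of $p^k Q^{-1}$ has $p$-adic valuation $k - \nu_i(p)$, so the minimal admissible $k$ is $\max_i \nu_i(p) = \nu(p)$.

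For $p = 2$ I would treat each block type in \eqref{eq:2adicform} separately. A purely diagonal term $2 \cdot 2^{\nu_i(2)} u_i$ (only present for $i > 2 r_2$) contributes $\tfrac{1}{2 \cdot 2^{\nu_i(2)} u_i}$ to $Q^{-1}$, which becomes an even $2$-adic integer in $N Q^{-1}$ precisely when $v_2(N) \geq \nu_i(2) + 2$. A hyperbolic block $2^{\tilde\nu_i}\bigl(\begin{smallmatrix} 0 & 1 \\ 1 & 0 \end{smallmatrix}\bigr)$ inverts to $2^{-\tilde\nu_i}\bigl(\begin{smallmatrix} 0 & 1 \\ 1 & 0 \end{smallmatrix}\bigr)$, whose diagonal vanishes, so integrality alone forces $v_2(N) \geq \tilde\nu_i$. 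The block $2^{\tilde\nu_i}\bigl(\begin{smallmatrix} 2 & 1 \\ 1 & 2 \end{smallmatrix}\bigr)$ inverts to $\tfrac{2^{-\tilde\nu_i}}{3}\bigl(\begin{smallmatrix} 2 & -1 \\ -1 & 2 \end{smallmatrix}\bigr)$; since $3$ is a $2$-adic unit, integrality of the off-diagonal and evenness of the diagonal $\tfrac{2N}{3\cdot 2^{\tilde\nu_i}}$ both reduce to $v_2(N) \geq \tilde\nu_i$. Taking the maximum over all blocks yields $e_2 = \max_i \bigl(\nu_i(2) + 2 \delta_{i > 2 r_2}\bigr) = \nu(2)$.

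The main hurdle is not conceptual but notational: one must carefully track how the convention $q(x) = \tfrac{1}{2} x^T Q x$ introduces an extra factor of $2$ on the diagonal of the Gram matrix (and nowhere else), so that one-dimensional $2$-adic summands pick up the shift $+2$ in $\nu(2)$, while the genuine binary blocks, whose matrix entries already sit at $2$-adic valuation $\tilde\nu_i$, do not.
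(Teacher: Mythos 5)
Your proof is correct and follows essentially the same route as the paper's: reduce to the $\Z_p$-normal form via a unimodular change of variables (after noting that integrality and evenness of the diagonal of $NQ^{-1}$ are preserved under $U^{-1}(\cdot)U^{-T}$), then invert the diagonal summands and the two types of binary blocks explicitly, the $+2$ shift for one-dimensional $2$-adic summands arising exactly as you describe. The paper is merely terser about the invariance step and about where the even-diagonal condition enters at $p=2$.
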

\begin{proof}
For odd $p$, choose $U \in \operatorname{GL}_m(\Z_p)$ such that $\tilde{Q} = U^T Q U$ is diagonal with entries $u_i p^{\nu_i(p)}$. It follows that $N \tilde{Q}^{-1} = N U^{-1} Q^{-1} U^{-T}$, thus $p^{\nu_i(p)} \mid N$ for all $p$. For the other inclusion, we ise that every entry of $p^{\nu(p)} Q^{-1} = U^{-1} p^{\nu(p)} \tilde{Q}^{-1} U^{-T}$ is in $\Z_p \cap \Q = \{\frac{a}{b} \in \Q \mid p \nmid b \}$. For $p=2$, we choose $U \in \operatorname{GL}_m(\Z_2)$ such that $\tilde{Q} = U^T Q U$ is of the form \eqref{eq:2adicform}. By 
\begin{align*}
\begin{pmatrix}
2 & 1 \\ 1 & 2
\end{pmatrix}^{-1} = \frac{1}{3} \begin{pmatrix}
2 & -1 \\ -1 & 2
\end{pmatrix}, \quad \begin{pmatrix}
0 & 1 \\ 1 & 0
\end{pmatrix}^{-1} = \begin{pmatrix}
0 & 1 \\ 1 & 0
\end{pmatrix},
\end{align*}
and the same argument as before, the claim follows
\end{proof}

To evaluate the Fourier coefficients of $\theta(Q,z)|\left(\begin{smallmatrix}
a & b \\ c & d
\end{smallmatrix}\right)$, we construct a diagonal matrix $D$ that depends on $Q$ and $c$. To this end, let $c= \tilde{c} 2^t$ for odd $\tilde{c}$  and fix $U \in \operatorname{SL_m(\Z)}$ such that $\frac{1}{2} x^T U^T Q U x \equiv  \sum_{i=1}^m q_i x_i^2 \,(\operatorname{mod} \tilde{c})$ and 
\begin{align*}
\frac{1}{2}x^T U^T Q U x \equiv \sum_{i=1}^{r} 2^{\tilde{\nu}_i} g_i(x_{2i-1}, x_{2i})
+ \sum_{i=2r+1}^m 2^{\nu_i} u_i x_i^2 ~(\operatorname{mod} 2^t)
\end{align*}
for odd $u_i \in \Z$ and $g_i(x_{2i-1}, x_{2i}) = (x_{2i-1}^2 + x_{2i-1} x_{2i} + x_{2i}^2)$ or $x_{2i-1} x_{2i}$, $\tilde{\nu}_i,\nu_i \le t$. We set $\tilde{d}_i = (q_i,\tilde{c},N)$, $t_{2i-1} = t_{2i} = \tilde{\nu}_i$ for $i \leq r$, 
\begin{align*}
t_i = \begin{cases} \nu_i & t \leq \nu_i +1, \\
\nu_i +1 & t \geq \nu_i +2.
\end{cases} \text{ for } i \geq 2r  \text{ and }  D = \operatorname{diag}(\tilde{d}_1 2^{t_1} ,\ldots,\tilde{d}_m 2^{t_m} ).
\end{align*} 
Furthermore, we set $\hat{d} := d/2$ if $2^2 \pdiv N, 2 \pdiv d$ and $2r<m$, and $\hat{d} := d$ otherwise. For future purposes, we also define 
\begin{align} \label{def:eta}
\eta(x_i) = \begin{cases} 1 & \text{if }  t \neq \nu_i+1, \\
\sqrt{2}  &\text{if }  t = \nu_i +1, x_i  \text{ odd}, \\ 0 & \text{if }
t = \nu_i +1, x_i  \text{ even} \end{cases}
\end{align}

\begin{lemma} \label{lemma:8}
Let $U,D$ as above and set $\tilde{Q} = U^T Q U$. Then $\frac{N}{\hat{d}} D  \tilde{Q}^{-1} D$ corresponds to a positive, integral quadratic form of level $\le N$. 
\end{lemma}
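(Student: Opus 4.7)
The claim breaks into three parts about $M := \frac{N}{\hat d} D \tilde Q^{-1} D$: positivity, integrality with even diagonal entries, and level at most $N$ (which by the definition of level is the statement that $NM^{-1}$ is again integral with even diagonal). Positivity is immediate, since $D$ is a positive integer diagonal matrix and $\tilde Q^{-1}$ inherits positive definiteness from $\tilde Q = U^T Q U$ for $U \in \operatorname{SL}_m(\Z)$. The arithmetic content rests on the identities
\[
M_{ij} = \frac{\tilde d_i \tilde d_j\, 2^{t_i + t_j}}{\hat d}\,(N\tilde Q^{-1})_{ij}, \qquad (NM^{-1})_{ij} = \frac{\hat d\,\tilde Q_{ij}}{\tilde d_i \tilde d_j\, 2^{t_i + t_j}},
\]
together with the integrality of $N \tilde Q^{-1}$ (because $\tilde Q$ has the same level $N$ as $Q$) and of $\tilde Q$ itself. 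I would verify these entries are integers, and the diagonal ones even, by a prime-by-prime $p$-adic valuation check, exploiting the congruences $\tilde Q_{ii}/2 \equiv q_i \pmod{\tilde c}$ and $\tilde Q_{ij} \equiv 0 \pmod{\tilde c}$ for $i\ne j$ that were built into the choice of $U$.

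For an odd prime $p$, I would put $\tilde Q$ into its $p$-adic Jordan form \eqref{eq:padicform} and read off the $p$-adic valuations of the entries of $\tilde Q$ and of $\tilde Q^{-1}$ in terms of the $\nu_i(p)$. Using $\gcd(c,d)=1$, I split into the cases $p\mid\tilde c$ (so $p\nmid d$, hence $v_p(\hat d)=0$) and $p\nmid\tilde c$ (so $v_p(\tilde d_i)=0$). The definition $\tilde d_i=(q_i,\tilde c,N)$ is engineered so that $v_p(\tilde d_i)$ recovers exactly the contribution from $\nu_i(p)$ needed to cancel denominators in $(\tilde Q^{-1})_{ij}$, while the extra factors of $\tilde c$ dividing $\tilde Q_{ij}$ for $i\ne j$ absorb $\hat d$ in the formula for $(NM^{-1})_{ij}$. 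Lemma \ref{lemma:help} is the input that bounds $\max_i \nu_i(p)$ by $v_p(N)$ and ensures the appropriate block structure.

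The truly delicate case is $p=2$, because of the binary blocks $g_i$ in \eqref{eq:2adicform} and the three-way definition of $t_i$ according to whether $t\le\nu_i$, $t=\nu_i+1$, or $t\ge\nu_i+2$. I would split $\tilde Q$ into its binary $2$-adic blocks and unary diagonal part and verify the valuation inequalities block-by-block, using the explicit inverses of the binary blocks already computed in the proof of Lemma \ref{lemma:help}. The exceptional prescription $\hat d=d/2$ in the case $2^2\pdiv N$, $2\pdiv d$, $2r<m$ should absorb a stray factor of $2$ that appears when only unary $2$-adic Jordan pieces are present, while the denominator $2^{t_i+t_j}$ in $(NM^{-1})_{ij}$ is compensated by the congruence of $\tilde Q$ modulo $2^t$ that defines $U$. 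The main obstacle I expect is the sheer combinatorial complexity of this $2$-adic case analysis, particularly verifying the even-diagonal condition on $(NM^{-1})_{ii}$ in each subcase; a systematic table indexed by the local $2$-adic type of $\tilde Q$ together with $v_2(c)$ and $v_2(d)$ should exhaust the possibilities.
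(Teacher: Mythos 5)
Your plan is correct in outline and rests on the same underlying computation as the paper — a prime-by-prime analysis of the Jordan splittings \eqref{eq:padicform}, \eqref{eq:2adicform} against the construction of $D$ — but it is organized differently, and the difference matters for how much work is left at $p=2$. You propose to verify integrality and even diagonals entry-by-entry for both $M=\frac{N}{\hat d}D\tilde Q^{-1}D$ and $NM^{-1}=\hat d\,D^{-1}\tilde Q D^{-1}$ via valuations. The paper instead observes that $2D^{-1}\tilde Q$ is (by construction of $D$) itself an integral quadratic form, writes down its local splittings — $\tilde u_i\frac{p^{\nu_i}}{(p^{\nu_i},d)}x_i^2$ at odd $p$ and $2^{\nu_i+1-t_i}$-scaled unary/binary blocks at $p=2$ — and then invokes Lemma \ref{lemma:help} to read off the level of that auxiliary form as $2N/d$ or $4N/d$. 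Both conclusions of the lemma (integrality with even diagonal of $M$, and level $\le N$) then fall out of the definition of level in one stroke, and the level computation automatically diagnoses the single exceptional configuration $2^2\pdiv N$, $2\pdiv d$, $2r<m$ in which the correction $\hat d=d/2$ is forced. What the paper's route buys is precisely the $2$-adic bookkeeping you flag as the ``truly delicate'' part: the case distinction over $t\le\nu_i$, $t=\nu_i+1$, $t\ge\nu_i+2$ and the binary blocks is already packaged in Lemma \ref{lemma:help} applied to the auxiliary form, so no separate table of $2$-adic types is needed. One caution about your version: for odd $p\mid\tilde c$ the congruence $\tfrac12 x^TU^TQUx\equiv\sum q_ix_i^2\ (\operatorname{mod}\tilde c)$ only diagonalizes $\tilde Q$ modulo $p^{v_p(\tilde c)}$, not over $\Z_p$, so when $v_p(\tilde c)<v_p(N)$ you must argue separately that $\tilde d_i=(q_i,\tilde c,N)$ still supplies exactly the divisibility needed; working with the local splitting of the auxiliary form, as the paper does, avoids conflating the mod-$\tilde c$ diagonalization with the $\Z_p$-Jordan form.
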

\begin{proof}
By construction $2 D^{-1} \tilde{Q}$ corresponds to a positive, integral quadratic form. Over $\Z_p$ and $p$ odd, $q= x^T D^{-1} \tilde{Q} x$ is equivalent  to 
$
\tilde{u}_1 \frac{p^{\nu_1}}{(p^{\nu_1},d)} x_1^2 + \ldots + \tilde{u}_m \frac{p^{\nu_m}}{(p^{\nu_m},d)} x_m^2.
$
Over $\Z_2$, $q$ is equivalent to
\begin{align*}
\sum_{i=1}^{r_2} 2^{\nu_{i}+1 -t_i} g(x_{2i-1},x_{2i})+ \sum_{i=2r_2+1}^m 2^{\nu_{i}+1 -t_i}u_i x_i^2,
\end{align*} 
By Lemma \ref{lemma:help}, this implies that $2 D^{-1} \tilde{Q}$ has level $4 \frac{N}{d}$ if  the maximum of $2^{\nu_{i}+1 -t_i} + 2 \delta_{i> 2r}$ is  attained for $i>2r$ and $t = \nu_i+1$ and $2 \frac{N}{d}$ otherwise. Hence, $\frac{N}{d} D \tilde{Q}^{-1} D$ has even diagonal entries unless $2^2 \pdiv N, 2 \pdiv d$  and $2r < m$. 
\end{proof}

For simplicity, we omit the weight $\frac{m}{2}$ in the slash operator $|[\varrho]_{\frac{m}{2}}$.  

\begin{lemma} \label{lemma:trafothetaseries} Let $\varrho = \left(\begin{smallmatrix} a & b \\ c & d \end{smallmatrix}\right) \in \operatorname{SL}_2(\Z)$, $\varrho \notin \Gamma_0(N)$, $D$ and $\tilde{Q}$ as in Lemma \ref{lemma:8}. Then,
\begin{align*} 
\theta(Q,z)|[\varrho] =	  \frac{\sqrt{\det D }}{\sqrt{\det Q}}   \sum_{x \in \Z^{m}} \eta(x) \omega e\Big(\frac{1}{2} x^T D \tilde{Q}^{-1} D x z\Big),
\end{align*}
for some $|\omega|=1$ and $\eta(x) := \prod_{i=1}^m \eta_i(x_i)$ with $\eta_i$ defined as in \eqref{def:eta}. For $\hat{d}$ as above and $S := \frac{N}{\hat{d}}D \tilde{Q}^{-1} D$, it follows that 
\begin{align*}
\theta(Q,z)|[\varrho]  = \sum_{n \geq 0} a(n) e\Big(\frac{\hat{d} nz}{N}\Big) \text{ with } |a(n)|\leq 2^{\frac{m}{2}} \frac{\sqrt{\det D }}{\sqrt{\det Q}}  r(S,n).
\end{align*}
If $d=(c,N)=1$, we have that $|a(n)| = \frac{r(NQ^{-1})}{\sqrt{\det Q}}$. 
\end{lemma}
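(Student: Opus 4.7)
The plan is to apply classical theta inversion to $\theta(Q,z)\bigl|[\varrho]$. Using $\varrho z = \tfrac{a}{c} - \tfrac{1}{c(cz+d)}$, I would first split
\[
\theta(Q,\varrho z) = \sum_{x \in \Z^m} e\!\left(\tfrac{a\,q(x)}{c}\right) e\!\left(-\tfrac{q(x)}{c(cz+d)}\right),
\]
and then decompose $x = U(y + D_0 v)$, where $U$ is the unimodular matrix fixed before the lemma and $D_0$ is a diagonal matrix whose entries are the ``dual'' conductors $\tilde c/\tilde d_i$ on the odd side and $2^{t-t_i}$ on the 2-adic side. This is arranged precisely so that $e(a\,q(Uy)/c)$ depends only on $y$ modulo $D_0$ while $v$ ranges freely in $\Z^m$. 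The sum then factorises into an outer finite Gauss sum over the residues $y$ (attached to $\tilde Q$ modulo the components of $c$) and an inner $v$-sum of Gaussian type.

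Next I would apply $m$-dimensional Poisson summation to the $v$-sum. The resulting Jacobian produces $(cz+d)^{-m/2}$ times a determinant factor; the $(cz+d)^{-m/2}$ cancels precisely the $(cz+d)^{m/2}$ from the slash operator, and the determinant factor combines with $1/\sqrt{\det Q}$ into the prefactor $\sqrt{\det D}/\sqrt{\det Q}$. The exponent becomes $\tfrac12 x^T D \tilde Q^{-1} D x \cdot z$. The outer finite Gauss sum factorises over the primes dividing $c$: for each odd $p$ the standard evaluation of a diagonal Gauss sum gives a unit modulus phase which I would absorb into the overall $\omega$. At $p=2$ the symplectic Jordan blocks $2^{\tilde\nu_i} g_i$ in \eqref{eq:2adicform} are evaluated in the spirit of Styer \cite{St1984}; the borderline case $t = \nu_i + 1$ is the only one where the Gauss sum does not produce a clean unit phase, but instead after Poisson summation survives only on odd $x_i$ with an additional factor $\sqrt 2$. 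Assembling these contributions yields precisely the function $\eta(x)$ from \eqref{def:eta}.

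Once the displayed transformation formula is established, the Fourier expansion follows from Lemma \ref{lemma:8}. Since $S = (N/\hat d)\,D \tilde Q^{-1} D$ is integral with even diagonal entries, we have
\[
\tfrac{1}{2} x^T D \tilde Q^{-1} D x = \frac{\hat d}{N}\cdot \tfrac{1}{2} x^T S x,
\]
so grouping terms by $n = \tfrac12 x^T S x$ gives
\[
a(n) = \frac{\sqrt{\det D}}{\sqrt{\det Q}}\,\omega \sum_{\substack{x \in \Z^m \\ \frac12 x^T S x = n}} \eta(x).
\]
Since each factor of $\eta$ lies in $\{0,1,\sqrt 2\}$ and the value $\sqrt 2$ can occur in at most $m - 2r_2 \le m$ coordinates, $|\eta(x)| \le 2^{m/2}$, which yields the claimed upper bound $|a(n)| \leq 2^{m/2}\sqrt{\det D}/\sqrt{\det Q}\cdot r(S,n)$. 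For the special case $d = (c,N) = 1$, a direct unwinding of the definitions forces $\tilde d_i = 1$ for all $i$ and $\hat d = 1$; the borderline case $t = \nu_i + 1$ never occurs, so $\eta \equiv 1$ on the support, and comparing $S$ with $NQ^{-1}$ upgrades the bound to the equality $|a(n)| = r(NQ^{-1},n)/\sqrt{\det Q}$.

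The main obstacle is the 2-adic bookkeeping. Evaluating the Gauss sums of the binary blocks $2^{\tilde\nu_i} g_i$, and isolating the borderline case $t = \nu_i + 1$ where Poisson summation only partially averages the phase (producing the factor $\sqrt 2 \cdot \mathds{1}_{x_i \text{ odd}}$ in $\eta_i$) rather than a full cancellation or a clean unit phase, is where the real work lies. The odd-prime Gauss sums and the Poisson summation itself are classical; all the delicacy sits in matching the choice of $D$ to the Jordan decomposition \eqref{eq:2adicform} at~$2$.
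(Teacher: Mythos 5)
Your overall architecture is essentially the paper's: the paper also reduces to the Gauss-sum coefficients $\alpha(x,Q,\varrho)$ (quoting Siegel's transformation formula rather than redoing Poisson summation by hand), then evaluates the odd-prime Gauss sums by completing the square and the $2$-adic ones block by block, with the borderline case $t=\nu_i+1$ producing exactly the factor $\sqrt{2}\,\delta_{\tilde x_i \text{ odd}}$ that defines $\eta$. Your derivation of the transformation formula, the support condition $D^{-1}U^Tx\in\Z^m$, and the bound $|a(n)|\le 2^{m/2}\sqrt{\det D}/\sqrt{\det Q}\cdot r(S,n)$ are all sound and match the paper step for step.

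There is, however, a genuine gap in your treatment of the case $d=(c,N)=1$. Knowing that $\tilde d_i=1$, $\hat d=1$, $\eta\equiv 1$ and $S=NQ^{-1}$ only gives you $a(n)=\frac{1}{\sqrt{\det Q}}\sum_{x}\omega(x)$ over the fiber $\{x:\tfrac12 x^TNQ^{-1}x=n\}$, where $\omega(x)$ is the unit-modulus phase coming out of the Gauss sums; by the triangle inequality this yields only $|a(n)|\le r(NQ^{-1},n)/\sqrt{\det Q}$. The asserted \emph{equality} requires the additional fact that $\omega(x)$ is constant on each fiber, and ``comparing $S$ with $NQ^{-1}$'' does not supply it. The paper proves this by an explicit computation: since $\tfrac12 U^TQU$ is invertible modulo $\tilde c$, the $x$-dependence of the odd part of the Gauss sum collapses to $e\bigl(-n\,\overline{2^{2t+2}aN}/\tilde c\bigr)$, and a parallel computation with the binary blocks (using the inverse of $\left(\begin{smallmatrix}2&1\\1&2\end{smallmatrix}\right)$ modulo $2^t$) shows the $2$-adic phase is $e\bigl(-n\,\overline{\tilde c a N}/2^t\bigr)$; both depend on $x$ only through $n$. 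You need to include this step, or the final claim of the lemma remains unproven.
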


\begin{proof}
	By the factorization $	\varrho =  \left(\begin{smallmatrix}
	1 & a / c \\ & 1 
	\end{smallmatrix}\right) \left(\begin{smallmatrix}
	& - 1 / c \\ c
	\end{smallmatrix}\right)\left( \begin{smallmatrix}
	1 & d / c \\ & 1 
	\end{smallmatrix}\right)$ and the transformation formula for the generalized theta series, cf.\ \cite[p.\,575]{Si1935}, we get that
	\begin{align} \label{eq:transform}
	\theta(Q,z) | [\varrho] = \sum_{x \in \Z^{m}} \alpha(x,Q,\varrho) e\Big(\frac{1}{2}  x^T Q^{-1} x z\Big)
	\end{align}
	where
	\begin{align} \label{eq:alpha}
	\alpha(x,Q,\varrho) = \frac{e(\frac{3m}{8})}{c^{\frac{m}{2}} \sqrt{\det Q}} e\Big(\frac{d}{2 c} x^T Q^{-1} x\Big) \sum_{v \in (\Z/c\Z)^{m}} e \Big(\frac{1}{c} v^T x + \frac{a}{2 c}v^T Q v\Big).
	\end{align}
We write $c = \tilde{c} \, 2^t$ with odd $\tilde{c}$ and decompose this sum into
 \begin{align*}
 \sum_{v \in (\Z/\tilde{c}\Z)^{m}} e \Big(\frac{a 2^{t} \frac{1}{2} v^T Q v + v^Tx}{\tilde{c}} \Big) \sum_{w \in (\Z/2^t\Z)^{m}}  e \Big(\frac{a \tilde{c} \frac{1}{2} w^T Q w + w^Tx}{2^t} \Big).
 \end{align*}
Next, we choose $U \in \operatorname{SL}_m(\Z)$ as in Lemma \ref{lemma:8} and substitute  $v,w$ by $Uv, Uw$. Since
$ \frac{1}{2}v^TU^TQUv \equiv \sum_{i=1}^m q_i v_i^2 \,(\operatorname{mod}\tilde{c})$ we obtain for $G(a,b,c) := \sum_{x\operatorname{mod} c} e\Big(\frac{ax^2+bx}{c}\Big)$ that
	\begin{align*}
 \sum_{v \in (\Z/\tilde{c}\Z)^{m}} e \Big(\frac{a 2^{t} \frac{1}{2} v^T Q v + v^Tx}{\tilde{c}} \Big) = 	\prod_{i=1}^m G(q_i a 2^t,u_i^Tx,\tilde{c}) 
\end{align*}
where $u_i$ denotes the $i$-th column of $U$. The $i$-th sum vanishes unless $d_i \mid u_i^T x$ for $d_i := (\tilde{c},q_i)$. By completing the square, the previous display equals 
	\begin{align} \label{eq:gaussum}
	\tilde{c}^\frac{m}{2} \eps_{\tilde{c}}^m \prod_{i=1}^m \sqrt{\tilde{d}_i}    \left(\frac{a 2^t \tilde{q}_i }{\tilde{c}_i}\right) e\Big(-\frac{\overline{ 2^{t+2} a \tilde{q}_i} \tilde{x}_i^2}{\tilde{c}_i}\Big)
	\end{align} 
	where $\tilde{q}_i = q_i / d_i, \tilde{c}_i = \tilde{c}/d_i$ and $\tilde{x}_i=u_i^T x / d_i$.	The absolute value of the term above is $\tilde{c}^\frac{m}{2} \sqrt{\tilde{d}_1} \cdots \sqrt{\tilde{d}_m}$. Furthermore, we have for $U$ as in Lemma \ref{lemma:8} that 
	\begin{align*}
	\frac{1}{2}x^TU^TQUx \equiv \sum_{i=1}^{r_1} 2^{\tilde{\nu}_i} g(x_{2i-1},x_{2i}) + \sum_{i=r_1+1}^{r_2} 2^{\tilde{\nu}_i}   x_{2i-1} x_{2i}  + \sum_{i=2r_2+1}^m 2^{\nu_i} b_i x_i^2 \,(\operatorname{mod}2^t),
	\end{align*}
	where $b_i$ are odd integers, $0\leq 2r_1 \leq 2r_2 \leq m$, $g(x_{2i-1},x_{2i}) = x_{2i-1}^2 + x_{2i-1}x_{2i} + x_{2i}^2$ and $\tilde{\nu_i},\nu_i \leq t$. For $i > 2r_2$, we need to evaluate $	G(2^{\nu_i} b_i a  \tilde{c}, u_i^T x, 2^t )$. These Gauss sums vanish unless $u_i^T x \equiv 0\,( \operatorname{mod} 2^{\nu_i})$. For $\tilde{x}_i =\frac{u_i^T x}{2^{\nu_i}}$, we obtain   
	\begin{align*}
	G(2^{\nu_i} b_i a  \tilde{c}, u_i^T x, 2^t ) = \begin{cases} 2^{\nu_i} & t= \nu_i, \\
	2^{\nu_i+1} \delta_{\tilde{x}_i \equiv 1 \operatorname{mod} 2} & t = \nu_i+1, \\
	2^{\nu_i+1+t/2}  \omega  e\Big(\frac{-\overline{b_i a \tilde{c}} (\tilde{x}_i/2)^2}{2^{t-\nu_i}}\Big) \delta_{\tilde{x}_i \equiv 0 \operatorname{mod} 2} & t \geq \nu_i +2
	\end{cases}
	\end{align*}
	where $\omega := \eps_{ b_i a  \tilde{c}}^{-1} \left(\frac{2^{t-\nu_i}}{b_i a  \tilde{c}}\right) e(1/8)$. For forms of type $2^{\tilde{\nu}} (x_1^2 + x_1 x_2 +
x_2^2)$, we have that 	
 \begin{align}  \label{eq:firstbinary}
\sum_{w_1 (\operatorname{mod} 2^{t})} e\Big( \frac{ w_1^2 a \tilde{c} 2^{\tilde{\nu}}  + w_1 u_1^T x_1}{2^t} \Big) &\sum_{w_2 (\operatorname{mod} 2^{t})}  e\Big( \frac{  w_2^2 a \tilde{c} 2^{\tilde{\nu}} + w_2 (u_1^T x_1 + a \tilde{c} 2^{\tilde{\nu}} w_1)}{2^t} \Big) \\ \notag
&=  2^{\tilde{\nu}+t} \zeta e\Big(\frac{\overline{3a \tilde{c}} (\tilde{x}_1^2 - \tilde{x}_1 \tilde{x}_2 +\tilde{x}_2^2  )}{2^{t-\tilde{\nu}}}\Big) \delta_{ \tilde{x}_1 \in \Z, \tilde{x}_2 \in \Z}
\end{align} 
where $\tilde{x}_i := \frac{u_i^T x}{2^{\tilde{\nu}}}$ and $\zeta$  is a root of unity that does not depend on $x$.

For $t= \tilde{\nu}$ the claim is obvious. If $t = \tilde{\nu}+1$, the latter sum vanishes unless $\tilde{x}_2 + a \tilde{c} w_1$ is odd and  \eqref{eq:firstbinary} equals 
\begin{align*}
2^{\tilde{\nu}+t} e\Big(\frac{\overline{a \tilde{c}} (\tilde{x}_1^2 - \tilde{x}_1 \tilde{x}_2 + \tilde{x}_2^2)}{2}\Big). 
\end{align*}
If $t \geq  \tilde{\nu}+2$, the second sum vanishes unless $\tilde{x}_1 + a \tilde{c} w_1$ is even. In this case \eqref{eq:firstbinary} equals
\begin{align*}
2^{\frac{3\tilde{\nu}+t+1}{2}} &\omega \sum_{w_1 (\operatorname{mod} 2^{t-\tilde{\nu}-1})} e\Big(\frac{(2 w_1 - \overline{a \tilde{c}} \tilde{x}_2)^2 a \tilde{c} + (2 w_1 - \overline{a \tilde{c}} ) \tilde{x}_1 - a v_1^2}{2^{t-\tilde{\nu}}} \Big) \\ 
&=  2^{\frac{3\tilde{\nu}+t-1}{2}} \omega  e\Big(\frac{\overline{a \tilde{c}} (-\tilde{x}_2^2+ \tilde{x}_1 \tilde{x}_2)}{2^{t-\tilde{\nu}}}\Big) G(3a\tilde{c},2(2\tilde{x}_2-\tilde{x}_1),2^{t-\nu}) \\
&= 2^{\tilde{\nu}+t} \zeta e\Big(\frac{\overline{3a \tilde{c}} (\tilde{x}_1^2 - \tilde{x}_1 \tilde{x}_2 +\tilde{x}_2^2  )}{2^{t-\tilde{\nu}}}\Big).
\end{align*} 

For forms of type $2^{\tilde{\nu}} x_{1} x_{2}$ we obtain by a simple computation  
\begin{align} \label{eq:secondbinary}
&\sum_{w_{1}(\operatorname{mod} 2^t)} e\Big(\frac{w_1 u_1^T x_{1}}{2^t} \Big)	\sum_{w_{2} (\operatorname{mod} 2^t)} e\Big(\frac{w_2 (u_2^T x_{2}  + a w_1  2^{\tilde{\nu}})}{2^{t}} \Big)  = 2^{t+\tilde{\nu}} e\Big(\frac{\overline{a \tilde{c}}  \tilde{x}_1 \tilde{x}_2}{2^{t-\nu}}\Big)
\end{align}
provided that $\tilde{x}_1, \tilde{x}_2 \in \Z$.  

By definition of $D$ and $\eta$, it follows that $\alpha(x,Q,\varrho)$ vanishes unless $D^{-1} U^T  x \in \Z^m$. If this holds, we obtain that 
\begin{align*}
\alpha(x,Q,\varrho) = \omega \, \eta(D^{-1} U^T  x)  \frac{\sqrt{\det D}}{\sqrt{\det Q}} 
\end{align*}
for some $\omega$ with  $\abs{\omega}=1$. Substituting $x$ by $U^{-T} D x$ in \eqref{eq:transform} yields the first claim of the lemma. 

The absolute value of the $n$-th Fourier coefficient of $\theta|[\varrho](z) = \sum_n a(n) e(\hat{d} nz/N)$ is given by
\begin{align*}
|a(n)| = \Big|\sum_{\frac{1}{2}x^T S x=n } \alpha(x,Q,\varrho)\Big| \leq 2^{\frac{m}{2}}\frac{ \det D }{\det Q} r(S,n)
\end{align*}
where $S:= \frac{N}{\hat{d}} D \tilde{Q}^{-1}D$. To prove $|a(n)| = r(NQ^{-1}) / \sqrt{\det Q}$ for $d= (N,c)=1$, we need to show that $\alpha(x,Q,\varrho)$ only depends on $n$ and not on the particular choice of $x$. By \eqref{eq:alpha} it suffices to consider the Gauss sum modulo $c$. By \eqref{eq:gaussum}, the dependence on $x$ modulo $\tilde{c}$ is given by 
	\begin{align*}
	e\Big(-\frac{\overline{2^{t+2}a} \sum_{i=1}^m \overline{q_i}  \tilde{x}_i^2}{\tilde{c}} \Big) = e\Big(-\frac{n\overline{2^{2t+2}aN} }{\tilde{c}} \Big).
	\end{align*}
	For the last step, we use that $d_i=1$ and hence,  $\frac{1}{2}U^TQU = \operatorname{diag}(q_1,\ldots,q_m)$ is invertible modulo $\tilde{c}$ which implies that $n = \frac{1}{2} x^T N Q^{-1} x \equiv N \sum_{i=1}^m  \overline{q_i} \tilde{x}_i^2 \,(\operatorname{mod} \tilde{c})$. If $N$ is even, then $c$ must be odd. If $N$ is odd, then $m$ must be even and there are no diagonal terms in the splitting over $\Z_2$. Since the inverse of $(\begin{smallmatrix}
	2 & 1 \\  1 & 2
	\end{smallmatrix})$ modulo $2^t$ is given by $\overline{3} (\begin{smallmatrix}
		2 & -1 \\  -1 & 2
	\end{smallmatrix})$ we obtain that 
	\begin{align*}
	n = \frac{1}{2} x^T N (U^T Q U)^{-1} x =  N \sum_{i=1}^{r_1}  \overline{3} (x_{2i-1}^2 - x_{2i-1} x_{2i} + x_{2i}^2) +  N \sum_{i=1}^{r_1}  x_{2i-1} x_{2i} \,(\operatorname{mod}2^t).
	\end{align*}
	It follows by \eqref{eq:firstbinary} and \eqref{eq:secondbinary} that the dependence on $x$ of the Gauss sum modulo $2^t$ is given by $e\Big(-\frac{n\overline{\tilde{c}aN}}{2^t} \Big)$. 
\end{proof}

This provides the necessary tools to prove our main result: 
\begin{proof}[Proof of Theorem \ref{thm:main} and \ref{thm:main2}]
Both $f$ and $g$ are cusp forms since the value of $\theta(Q,z)$ at each cusp  depends only on a genus-invariant term. Set  $\mu = [\operatorname{SL}_2(\Z) : \Gamma_{0}(N)]$, $\tilde{\mu} = [\operatorname{SL}_2(\Z): \Gamma(N)]$ and recall that
	\begin{align*}
	\frac{\tilde{\mu}}{\mu}  = N^2 \prod_{p \mid N } \Kl{1- \frac{1}{p}}.
	\end{align*}
	We obtain
	\begin{align*}
	\langle g,g \rangle = \int_{\Gamma_{0}(N) \backslash \Ha} \abs{g(z)}^2 y^{\frac{m}{2}-2} dx\,dy = \frac{\mu}{\tilde{\mu}} \sum_{j=1}^{\tilde{\mu}} \int_{\operatorname{SL}_2(\Z) \backslash \Ha} \big|g|[\tau_j])(z)\big|^2 y^{\frac{m}{2}-2} dx\,dy
	\end{align*}
	for a set of coset representatives $\tau_i$ of $\Gamma(N) \backslash \operatorname{SL}_2(\Z)$. On this set, we define an equivalence relation 
	\begin{align*}
	\tau_i \sim \tau_j \Leftrightarrow \tau_i \in \Gamma(N) \tau_j T  \quad \text{for } 
	T= \left. \left\{ \begin{pmatrix}
	1 & x \\ & 1
	\end{pmatrix} \, \right| \, 0 \leq x \leq N-1 \right\}.
	\end{align*}
	For the remaining matrices, a set of  representatives is given by \[\Big\{ \varrho_{a,c}  = \begin{pmatrix} a & * \\ c & * \end{pmatrix} \in \operatorname{SL}_2(\Z) \mid a, c\, (\operatorname{mod} N)\Big\}.\] For simplicity, we set $\varrho := \varrho_{a,c}$. It follows that 
	\begin{align} \label{eq:4.4}
	\langle g,g \rangle = \frac{\mu}{\tilde{\mu}} \sum_{c\, (\operatorname{mod} N)} \sum_{\substack{a (\operatorname{mod} N) \\ (a,c,N)=1}}  \bigcup_{t \in T} \int_{t F}  \big|g|[\varrho](z)\big|^2 y^{\frac{m}{2}-2} dx\,dy,
	\end{align}
	where $F = \{x+iy  \mid - \frac{1}{2} \le x \le \frac{1}{2}, |x+iy| \geq 1 \}$ is a fundamental domain of $\operatorname{SL}_2(\Z) \backslash \Ha$. We put $d=(c,N)$ and write $g|[\varrho] = \sum_n a_{\varrho}(n) e\big(\frac{n \hat{d} z}{N} \big)$ for $\hat{d}$ either $d/2$ or $d$. By the previous display, it follows that 
	\begin{align} \notag
	\langle g, g \rangle &\le N \frac{\mu }{\tilde{\mu}} \sum_{\substack{ c (\operatorname{mod} N) \\ (c,N)=d}} \sum_{\substack{a (\operatorname{mod} N) \\ (a,d)=1}} \int_{\frac{1}{2}}^{\infty} \sum_{n=1}^\infty |a_{\varrho}(n)|^2 \operatorname{exp}\Big(- \frac{2 \pi  y n d}{N}\Big) y^{\frac{m}{2}-2} dy \\  \notag
	&\ll N \frac{\mu }{\tilde{\mu}} \sum_{\substack{ c\, (\operatorname{mod} N) \\ (c,N)=d}} \sum_{\substack{a (\operatorname{mod} N) \\ (a,d)=1}} \Big(\frac{N}{d}\Big)^{\frac{m}{2}-1} \sum_{n \leq (N/d)^{1+\eps}} \frac{|a_{\varrho}(n)|^2}{n^{\frac{m}{2}-1}} + \mathcal{O}(N^{-100}). 
	\end{align}
	For $c=0$, we have $d=N$ and $a_\rho(n) = r(Q,n) - r(Q',n)$. Hence, the contribution of this case is bounded by $\mathcal{O}(N^\eps)$. For $c \neq 0$ we apply Lemma \ref{lemma:trafothetaseries}.  This gives
	\begin{align*}
	|a_\varrho(n)|^2 \le \frac{\det D_\varrho}{\det Q} (r(S_\varrho,n)^2 + r(\tilde{S}_\varrho,n)^2),
	\end{align*}
	for matrices $D_\varrho, S_\varrho,\tilde{S}_\varrho$ given as in Lemma  \ref{lemma:trafothetaseries}. Let $a'_1,\ldots,a'_m$ denote the associated diagonal entries of $S_\varrho$. By Lemma \ref{eq:thetaseriesl} we infer that
	\begin{align*}
	\sum_{n \le (N/d)^{1+\eps}}  \frac{r(S_\varrho,n)^2}{n^{\frac{m}{2}-1}} &\ll \Big( 1+ \sum_{j=m-1}^{2m-2} \Big(\frac{N}{d}\Big)^{\frac{j-m}{2}+1} \frac{\sqrt{a'_1 a'_2}}{\det S_d} \prod_{i=\lfloor \frac{j+5}{2}\rfloor}^m \sqrt{a'_i} \prod_{k=\lfloor \frac{j+4}{2}\rfloor}^m \sqrt{a'_k}\Big) N^\eps.
	\end{align*} 
	To estimate this sum, we use  $a'_1 a'_2 \ll (\det S_\varrho)^{\frac{2}{m}}$ and $a'_i \ll N$ for $3 \le i \le m$ and $\det S_\varrho = (N/d)^m (\det D_\varrho)^2 (\det Q)^{-1}$. This yields that the $j$-th term is bounded by 
	 \begin{align*}
	 N^{-\frac{m}{2}+1} d^{\frac{3m-j}{2}-2} (\det Q)^{1-\frac{1}{m}} (\det D)^{\frac{2}{m}-2}. 
	 \end{align*}
	 If $m$ is even, we use that $a_i \gg 1$ and bound the $j=m-1$ term by 
	\[\frac{\sqrt{N}}{\sqrt{d}\sqrt{\det S_d}} \prod_{i=\frac{m}{2}+2}^{m} \sqrt{a'_i} \ll \frac{d^{\frac{m}{2}} N^{-\frac{m}{4}} \sqrt{\det Q}}{\det D_\varrho}.\]  
By construction, $\det D_\varrho$ only depends on $d=(c,N)$. Since there are $\phi(\frac{N}{d})$ choices for $(c,N)=d$ and $\frac{\phi(d)}{d}N$ for $a$, it follows by the previous five displays that 
\begin{align} \label{eq:est0} 
\langle g,g \rangle \ll N^\eps + \frac{N^\frac{m}{2}}{\det Q}\sum_{d \mid N} \frac{\det D_\varrho}{d^\frac{m}{2}} + \frac{N^{\frac{m}{4}}}{\sqrt{\det Q}}+\frac{N}{ \det Q^\frac{1}{m}} \sum_{d \mid N} \frac{d^{\lfloor\frac{m-3}{2}\rfloor}}{(\det D_\varrho)^{1-\frac{2}{m}}}. 
\end{align}
For the first $d$-sum, we use that
\begin{align} \label{eq:est1} 
\frac{\det D_\varrho }{\det Q} \leq \frac{F(D_\varrho,\frac{m}{2})}{F(Q,\frac{m}{2}) } \le \frac{d^{\frac{m}{2}}}{F(Q,\frac{m}{2})}.
\end{align}
This bound is sharp for at least one $d$ which can be constructed as follows. Let $v_p(j)$ defined as after \eqref{eq:2adicform}, choose $k_p$ maximal such that $v_{p}(k_p) \geq \frac{m}{2}$ and set $d= \prod_{p \mid N} p^{k_p}$. 

By \eqref{eq:est0} and \eqref{eq:est1} we obtain the bound for $m=3$ and $m=4$. For $m \ge 5$, we estimate 
\[
\frac{d^{\frac{m}{2}-\frac{3}{2}}}{(\det D_\varrho)^{1-\frac{2}{m}}} \le  \frac{d^{\frac{m}{2}-\frac{3}{2}}}{F(D_\varrho,\frac{m-1}{2}-\frac{1}{m})^{1-\frac{2}{m}}} \le \frac{N^{\frac{m}{2}-1}}{F (Q,\frac{m-1}{2}-\frac{1}{m})^{1-\frac{1}{m}}}.
\]
By definition, the same bounds hold for $\langle f,f \rangle$. If $Q$ is diagonal, we can do slightly better. The Fourier coefficients $a_\varrho(n)$ of $(f|[\varrho])(z)$ are bounded by \[\frac{\det D_\varrho}{\det Q} (r(S_\varrho,n) + r(\operatorname{gen}S_\varrho,n)).\] For the latter term, we make use of \eqref{eq:genusupperbound}:
\begin{align*} 
\frac{\det D_\varrho}{\det Q} \! \sum_{n \le (N/d)^{1+\eps}} \!  \frac{r(\operatorname{gen} S_\varrho,n)^2}{n^{\frac{m}{2}-1}} \ll \frac{d^m}{N^m \det D_\varrho } \! \sum_{n \le (N/d)^{1+\eps}} \!  n^{\frac{m}{2}-1}  (n,N)   \ll \Big(\frac{N}{d}\Big)^{-\frac{m}{2}} N^{\eps}. 
\end{align*} 
As a result, the contribution of $r(\operatorname{gen}S_\varrho,n)$ is bounded by $\mathcal{O}(N^{\eps})$. 

For $Q = \operatorname{diag}(a_1,\ldots,a_n)$, the diagonal entries of $S_\varrho$  are given by
\[\frac{N (a_1,d)^2}{ d \, a_1}, \ldots, \frac{N (a_m,d)^2}{ d \, a_m}. \]
We sort these entries by value and denote them by  $a'_1,\ldots,a'_m$. For $3 \le l_1,l_2 \le m+1$, we estimate
\begin{align*}
\sqrt{a'_1 a'_2} \prod_{i=l_1}^m \sqrt{a'_i} \prod_{j=l_2}^m \sqrt{a'_j
}  \le 
\Big(\frac{N}{d}\Big)^{\frac{2m+4-l_1-l_2}{2}}  \frac{\det D_\varrho}{\sqrt{a_m a_{m-1}}}.  
\end{align*}
Thus, we get
\begin{align*}
\sum_{d \mid N} \Big(\frac{N}{d}\Big)^{\frac{m}{2}}\frac{\det D_\varrho}{\det Q} \sum_{n \le (N/d)^{1+\eps}}  \frac{r(S_\varrho,n)^2}{n^{\frac{m}{2}-1}} \ll \Big( \frac{N^{\frac{m}{2}}}{\det Q} \sum_{d \mid N} \frac{\det D_\varrho}{d^{\frac{m}{2}}} + \frac{N}{\sqrt{a_m a_{m-1}}} \Big) N^\eps. 
\end{align*}
Applying \eqref{eq:est1} gives the second upper bound of Theorem \ref{thm:main}. 
\end{proof}

It remains to determine a lower bound: 

\begin{thm} Let $f(z) = \theta(Q,z)- \theta(\operatorname{gen}Q,z)$. Then, 
\begin{align*}
\langle f,f \rangle \gg (\min N Q^{-1})^{1-\frac{m}{2}} \frac{N^\frac{m}{2}}{\det Q} + \mathcal{O}(N^\eps). 
\end{align*}
\end{thm}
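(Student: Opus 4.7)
The plan is to carry out the unfolding computation from the proof of Theorem \ref{thm:main} in reverse, extracting a lower bound from the cusps $\varrho = \varrho_{a,c}$ with $(c,N)=1$. Each cusp contributes non-negatively to $\langle f,f\rangle$, and at these ``good'' cusps the last assertion of Lemma \ref{lemma:trafothetaseries} pins down $|a_{\varrho,\theta(Q)}(n)| = r(NQ^{-1},n)/\sqrt{\det Q}$ exactly. Working with the Fourier expansion in $e(nz/N)$ (since $\hat{d}=1$), Parseval over $x \in [0,N]$ and restriction to $y \ge 1/2$ as in \eqref{eq:4.4} give
\begin{equation*}
\langle f,f\rangle \geq N\frac{\mu}{\tilde{\mu}}\sum_{\substack{c,a\,(\operatorname{mod}N)\\(c,N)=1}} \int_{1/2}^\infty \sum_{n\geq 1}|a_{\varrho,f}(n)|^2 e^{-4\pi ny/N} y^{m/2-2}\,dy.
\end{equation*}

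Next I apply the elementary inequality $|\alpha-\beta|^2 \geq \tfrac{1}{2}|\alpha|^2-|\beta|^2$ pointwise to the Fourier coefficients of $f = \theta(Q) - \theta(\operatorname{gen}Q)$, which separates the two contributions. For the $\theta(Q)$-part I retain only the single term $n = M = \min_{x \in \Z^m} \tfrac{1}{2} x^T NQ^{-1} x$, where $r(NQ^{-1},M) \geq 2$ because $\pm x$ yield distinct minimisers. The $y$-integral at this $n$ is $\asymp (N/M)^{m/2-1}$ (using $M \le N$, which follows from Hermite's theorem), so after summation over the $\phi(N) \cdot N$ cusps with $(c,N)=1$ and inclusion of the prefactor $N\mu/\tilde{\mu}$ (whose product with the cusp count is of order $N$ up to $N^\varepsilon$), the positive contribution totals $\asymp N^{m/2}/(M^{m/2-1}\det Q)$, matching the claimed main term.

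The main obstacle is to show that the subtracted $\theta(\operatorname{gen}Q)$-term contributes only $O(N^\varepsilon)$. Writing $\theta(\operatorname{gen}Q,z) = C^{-1}\sum_R \theta(R,z)/\#o(R)$, applying Lemma \ref{lemma:trafothetaseries} to each $\theta(R,z)|[\varrho]$, and using that $\det R = \det Q$ is constant on the genus, the triangle inequality gives $|a_{\varrho,\theta(\operatorname{gen}Q)}(n)| \leq r(\operatorname{gen}(NQ^{-1}),n)/\sqrt{\det Q}$. Since $\det(NQ^{-1}) = N^m/\det Q$, the bound \eqref{eq:genusupperbound} applied to $NQ^{-1}$ yields $|a_{\varrho,\theta(\operatorname{gen}Q)}(n)|^2 \ll n^{m-2}(n,N)N^{-m+\varepsilon}$. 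Weighting by the $y$-integral $\asymp (N/n)^{m/2-1}$ and summing over $n \leq N^{1+\varepsilon}$ (the tail decays rapidly) gives at most $O(N^{-1+\varepsilon})$ per cusp, using $\sum_{n \leq N} (n,N) \ll N^{1+\varepsilon}$. The total contribution after summing over cusps is therefore $O(N^\varepsilon)$, which is absorbed into the additive $+\mathcal{O}(N^\varepsilon)$ in the statement and completes the argument.
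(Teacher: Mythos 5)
Your proposal is correct and follows essentially the same route as the paper: unfold $\langle f,f\rangle$ over $\Gamma(N)\backslash\operatorname{SL}_2(\Z)$, keep only the cusps with $(c,N)=1$ where Lemma \ref{lemma:trafothetaseries} gives $|a_\varrho(n)|$ exactly, extract the main term from the single coefficient $n=\min NQ^{-1}$ (with $r(NQ^{-1},M)\ge 2$ and $M\ll N$ by Hermite), and dispose of the genus part via \eqref{eq:genusupperbound} applied to $NQ^{-1}$. The only cosmetic difference is that you separate the two theta series with $|\alpha-\beta|^2\ge\tfrac12|\alpha|^2-|\beta|^2$ and a triangle inequality over the genus, where the paper uses that the phase in Lemma \ref{lemma:trafothetaseries} is constant on the genus to write the coefficient as an exact difference; also, for a lower bound the truncation should be to $y\ge 1$ (so that $\{0\le x\le N,\,y\ge 1\}\subseteq\bigcup_t tF$) rather than $y\ge 1/2$, which affects nothing asymptotically.
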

\begin{proof}
By \eqref{eq:4.4}, we have that 
\begin{align} \label{eq:lowerbound}
\langle f,f \rangle \geq \frac{\mu}{\tilde{\mu}} N \sum_{\substack{c\,(\operatorname{mod}N) \\ d=(c,N)}} \sum_{\substack{a (\operatorname{mod} N) \\ (a,d)=1}} \sum_{n=1}^\infty |a_{\varrho}(n)|^2 \int_{1}^{\infty} \operatorname{exp}\Big(- \frac{4 \pi  y n d}{N}\Big) y^{\frac{m}{2}-2} dy,
\end{align}
where $\varrho= (\begin{smallmatrix}
a & * \\ c & *
\end{smallmatrix})$. For $t>0$, it holds by substitution and \cite[2.33.5]{Gr2007}  that 
\[\int_{1}^{\infty} \operatorname{exp}(- t y) y^{\frac{m}{2}-2} dy \gg t^{1-\frac{m}{2}} \operatorname{exp}(- t).\] 

We drop all terms with $(c,N) > 1$. For the remaining $c$ and $a$, we have  by Lemma \ref{lemma:trafothetaseries} that $|a_\varrho(n)|^2 = (\det Q)^{-1} |r(NQ^{-1},n) - r(\operatorname{gen}NQ^{-1},n)|^2$. Since $\frac{\mu}{\tilde{\mu}} N \varphi(N)  = 1$ it follows that 
\begin{align*}
\langle f,f \rangle \gg \frac{N^{\frac{m}{2}}}{\det Q} \sum_{n=1}^\infty \frac{r(NQ^{-1},n)^2}{n^{\frac{m}{2}-1}} e^{- \frac{4 \pi n}{N}} -  \frac{N^{\frac{m}{2}}}{\det Q}\sum_{n=1}^\infty \frac{r(\operatorname{gen }NQ^{-1},n)^2}{n^{\frac{m}{2}-1}} e^{- \frac{4 \pi n}{N}}.
\end{align*}
By \eqref{eq:genusupperbound} the latter term is bounded by  $\mathcal{O}(N^\eps)$.
 For the former term, we apply Hermite's theorem. This gives 
\[\min N Q^{-1} \leq (4/3)^\frac{m-1}{2} N (\det Q)^{-\frac{1}{m}}.\] 
As a result, we obtain
\begin{align*}
 \sum_{n=1}^\infty \frac{r(NQ^{-1},n)^2}{n^{\frac{m}{2}-1}} \exp\Big(- \frac{ 2 \pi n}{N}\Big) \gg (\min N Q^{-1})^{1-\frac{m}{2}}. 
\end{align*}
This yields the claim. 
\end{proof}

\section{Estimates for the error term}

We start by deriving uniform estimates for the ternary case: 

\begin{lemma} \label{lemma:m=3} Consider a positive, integral, ternary quadratic form $q(x) = \frac{1}{2} x^T Q x$. Then, 
\begin{align*}
|r(Q,n) - r(\operatorname{spn}Q,n)| \ll \frac{\sqrt{N}}{(\det Q)^\frac{1}{6}} \Big(\frac{n^\frac{13}{28}}{N^{\frac{1}{7}}} + \frac{n^\frac{7}{16}}{N^{\frac{1}{16}}} + n^\frac{1}{4} \frac{\sqrt{(\tilde{n},N^\infty)} v^{\frac{1}{4}} \sqrt{(n,N)}}{\sqrt{N} }
	\Big) (nN)^\eps,
\end{align*} 
where $\tilde{n}$ is the largest divisor of $n$ such that $(\tilde{n},N^\infty)$ is squarefree and
\begin{align*}
v= \prod_{\substack{q \textnormal{ anisotropic over } p }} (p^{\infty},N).
\end{align*} 
\end{lemma}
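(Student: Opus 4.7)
The plan is to combine the Shimura-lift-based Fourier coefficient estimate \eqref{eq:2} from \cite{Wa2017} with the sharp Petersson-norm bound provided by Theorem \ref{thm:main} at $m = 3$, followed by a local rearrangement at primes dividing $N$.

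First, $g(z) := \theta(Q,z) - \theta(\operatorname{spn} Q, z)$ lies in $U^\perp \subseteq S_{3/2}(N,\chi)$, as noted before \eqref{eq:2}. Since $\theta(\operatorname{spn} Q, z)$ is a convex combination of $\theta(R, z)$ for $R$ in the spinor genus (hence in the genus) of $Q$, the triangle inequality together with Theorem \ref{thm:main} at $m = 3$ yields
\[
\|g\| \ll \frac{N^{1/2+\epsilon}}{(\det Q)^{1/6}}.
\]
Inserting this into \eqref{eq:2} and distributing the $n^{1/4}$ prefactor produces the first two claimed terms $n^{13/28}/N^{1/7}$ and $n^{7/16}/N^{1/16}$ directly; the leading ``$1$'' in \eqref{eq:2} contributes $n^{1/4}\sqrt{N}/(\det Q)^{1/6}$, which is absorbed into $n^{13/28}/N^{1/7}$ in the non-trivial range $N \ll n^{3/2}$ and is otherwise dominated by the trivial bound $r(Q,n), r(\operatorname{spn} Q, n) \ll n^{1/2+\epsilon}$.

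The remaining task is to replace $\sqrt{v'(n,N)}/\sqrt N$ in the last term of \eqref{eq:2}---where $v'$ denotes the ``$v$'' in the decomposition $n = tv'^2w^2$ used there---by $\sqrt{(\tilde n, N^\infty)}\,v^{1/4}\sqrt{(n,N)}/\sqrt N$. One may assume $v'$ is supported on primes dividing $N$, so it suffices to verify $v_p(v') \le v_p((\tilde n, N^\infty)) + \tfrac{1}{2}v_p(v)$ at each prime $p \mid N$. For $p$ over which $q$ is anisotropic, $v_p(v) = v_p(N)$ by definition, and either $v_p(n) \le v_p(N)$ (giving $v_p(v') = \lfloor v_p(n)/2 \rfloor \le v_p(N)/2$) or a Hensel-type reduction of $r(Q,n) - r(\operatorname{spn} Q, n)$ onto a shifted problem with strictly smaller $v_p(n)$ shows that the excess contributes nothing. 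For $p$ over which $q$ is isotropic, the spinor-genus analogue of the vanishing criterion \eqref{eq:gen=spn} forces $r(Q,n) = r(\operatorname{spn} Q, n)$ as soon as $v_p(n)$ exceeds a bounded constant, so in the remaining cases $v_p(v') \le 1 \le v_p((\tilde n, N^\infty))$ trivially.

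The principal obstacle is this local analysis at isotropic primes with large $p$-divisibility of $n$: one must exploit the spinor-genus identification carefully to replace the crude $\sqrt{(n, N^\infty)}$ by the sharper $\sqrt{(\tilde n, N^\infty)}$ appearing in the statement, which is precisely what makes the ternary analysis more delicate than its $m \ge 4$ analogue.
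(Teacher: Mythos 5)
Your opening steps---bounding $\norm{g}$ for $g=\theta(Q,z)-\theta(\operatorname{spn}Q,z)$ by writing $\theta(\operatorname{spn}Q,z)$ as a convex combination of $\theta(R,z)$ and invoking Theorem \ref{thm:main} with the triangle inequality, then feeding this into \eqref{eq:2}---match the paper's strategy, and your absorption of the leading ``$1$'' of \eqref{eq:2} into the $n^{13/28}/N^{1/7}$ term (using the trivial bound when $N \gg n^{3/2}$) is sound. The genuine gap is in your local analysis of the last term of \eqref{eq:2}. At isotropic primes $p \mid N$ you assert that ``the spinor-genus analogue of \eqref{eq:gen=spn} forces $r(Q,n)=r(\operatorname{spn}Q,n)$ as soon as $v_p(n)$ exceeds a bounded constant.'' No such vanishing exists: \eqref{eq:gen=spn} is a criterion for $r(\operatorname{gen}Q,n)=r(\operatorname{spn}Q,n)$ (exclusion of spinor exceptional integers), not for the vanishing of the cuspidal difference $r(Q,n)-r(\operatorname{spn}Q,n)$, which is generically nonzero for arbitrarily $p$-divisible $n$---bounding it is the entire content of the lemma. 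Similarly, at anisotropic primes with $v_p(n)>v_p(N)$ your ``Hensel-type reduction'' is only asserted; such a reduction relates $r(Q,n)$ to representation numbers of \emph{other} forms at smaller integers, and does not by itself show that ``the excess contributes nothing.''

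The missing ingredient is the local argument of Blomer \cite[\S 1.3]{Blomer2008}, which the paper invokes at exactly this point: one writes
\begin{align*}
r(Q,n)-r(\operatorname{spn}Q,n)=\sum_j \gamma_j\,\bigl(r(Q_j,m_jv_jw_j)-r(Q'_j,m_jv_jw_j)\bigr)
\end{align*}
with $\sum_j\abs{\gamma_j}\ll(nN)^{\eps}$, where $Q_j,Q'_j$ lie in the same spinor genus with level dividing $N$, $m_jv_jw_j\mid n$, $(m_j,N)=1$, $w_j$ is squarefree, and $v_j\mid N^2$ is supported on the primes over which $q$ is anisotropic. This is precisely the structural control that lets one apply \eqref{eq:2} to each difference with the ``$v$'' of the decomposition $n_j=tv'^2w^2$ bounded in terms of the squarefree quantity $(\tilde n,N^{\infty})$ and the anisotropic invariant $v$; it also explains why Theorem \ref{thm:main} is stated for $\theta(Q,z)-\theta(Q',z)$ with $Q'$ a possibly different form in the genus of $Q$, and not merely for $\theta(Q,z)-\theta(\operatorname{gen}Q,z)$. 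Without this reduction your argument does not close at primes $p\mid N$ with $p^2\mid n$.
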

\begin{proof} By a local argument of Blomer \cite[\S\,1.3]{Blomer2008}, it is possible to write 
	\begin{align*}
	r(Q,n) - r(\operatorname{spn}Q,n) = \sum_{j} \gamma_j r(Q_j,m_j v_j w_j) - r(Q'_j,m_j v_j w_j)
	\end{align*} 
	for $Q_j, Q'_j$ in the same spinor genus and of level dividing $N$, $\sum_j |\gamma_j| \ll (nN)^\eps$, $m_j v_j w_j\mid n$ for all $j$, $(m_j,N)=1$, $w_j$ squarefree, $v_j \mid N^2$ and $q$ anisotropic over all prime divisors of $v_j$. The claim now follows by applying Theorem \ref{thm:main} and \eqref{eq:2}. 
\end{proof}

For $m \geq 4$, we determine an effective lower bound for $n$ with respect to $Q$. To this end, recall the definition of $F(Q,s)$ given in \eqref{eq:F(Q,s)}.

\begin{lemma} \label{lemma:m=4} Let $Q$ correspond to a positive, integral, quadratic form in $m$ variables. Then, we have for $m=4$ that 
\begin{align*}
|r(Q,n) - r(\operatorname{gen}Q,n)| \ll n^{\frac{m}{4}-\frac{1}{2}} \Big(\frac{N}{\sqrt{F(Q,2)}} + \frac{\sqrt{N}}{(\det Q)^\frac{1}{8}}\Big)  \min \Big(\sqrt{N}, 1 + \frac{n^\frac{1}{4} (n,N)^\frac{1}{4} }{\sqrt{N}}\Big) (nN)^\eps. 
\end{align*} 
Let $\prod_p \beta_p(q,n) \gg \beta^{-1} (nN)^{-\eps}$. Then, $r(Q,n) \geq 1$ holds if 
\begin{align*}
n \gg \beta^4 (n,N) \Big(N \frac{\det Q}{F(Q)} + (\det Q)^\frac{3}{4} \Big)^2 \quad \text{or} \quad n \gg \beta^2 N^2 \Big(N \frac{\det Q}{F(Q)} + (\det Q)^\frac{3}{4} \Big). 
\end{align*}	
For $m \geq 5$, we have $r(Q,n) \geq 1$ if 	
\begin{align*}
n \gg  \Big( \beta^2 \sqrt{(n,N)} \Big(N^{\frac{m}{2}-1} \frac{\det Q}{F(Q,\frac{m-1}{2}-\frac{1}{m})} + (\det Q)^{1-\frac{2}{m}}  \Big) \Big)^{\frac{2}{m-3}}. 
\end{align*} 
Moreover, we have that 
\begin{align*}
\beta \leq \begin{cases}
\sqrt{(n,N)} & \text{if } m=4 \text{ and } n \text{ primitively locally represented}, \\ 
\min( (n,N), (n,(\det Q)^{\frac{1}{m-4}})) & \text{if } m\geq 5 \text{ and } n \text{  locally represented}. 
\end{cases}
\end{align*}
\end{lemma}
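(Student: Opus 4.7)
The approach is to view $r(Q,n)-r(\operatorname{gen}Q,n)$ as the $n$-th Fourier coefficient of the cusp form $f=\theta(Q,z)-\theta(\operatorname{gen}Q,z)\in S_{m/2}(N,\chi)$ and to combine the Petersson-type estimate \eqref{eq:petersson} with the norm bound supplied by Theorem \ref{thm:main}. For $m=4$, Theorem \ref{thm:main} yields $\|f\|\ll(N/\sqrt{F(Q,2)}+\sqrt{N}/(\det Q)^{1/8})N^\eps$, while for $m\ge 5$ one obtains an analogous estimate involving $N^{m/4}/\sqrt{F(Q,\frac{m-1}{2}-\frac{1}{m})}$ together with a secondary $(\det Q)^{1/2-1/m}$ contribution coming from the final $d$-sum in \eqref{eq:est0}. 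Substituting these into \eqref{eq:petersson} produces the stated bound with the parenthetical factor $1+n^{1/4}(n,N)^{1/4}/\sqrt{N}$. To obtain the $\min$ with $\sqrt{N}$ in the $m=4$ statement, I would compare with the trivial cusp-form estimate $|a(n)|\ll\|f\|\sqrt{N}\,n^{m/4-1/2}(nN)^\eps$, which follows from the Fourier expansion of $f$ at infinity together with the volume $\asymp N^{1+\eps}$ of a fundamental domain for $\Gamma_0(N)\backslash\Ha$.

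For the effective lower bounds, the plan is simply to ensure that the main term $r(\operatorname{gen}Q,n)$ dominates the cuspidal error. The hypothesis $\prod_p\beta_p(q,n)\gg\beta^{-1}(nN)^{-\eps}$ combined with \eqref{eq:rgen} gives $r(\operatorname{gen}Q,n)\gg n^{m/2-1}/(\sqrt{\det Q}\,\beta)(nN)^{-\eps}$. Requiring this to exceed the error bound and solving for $n$ produces the two stated conditions for $m=4$: choosing $\min=\sqrt{N}$ and then squaring gives $n\gg\beta^2N^2(N\det Q/F(Q,2)+(\det Q)^{3/4})$, while choosing $\min=n^{1/4}(n,N)^{1/4}/\sqrt{N}$ and raising to the fourth power yields $n\gg\beta^4(n,N)(N\det Q/F(Q,2)+(\det Q)^{3/4})^2$. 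The analogous calculation for $m\ge 5$, using the dominant option $n^{1/4}(n,N)^{1/4}/\sqrt{N}$ in \eqref{eq:petersson}, leads after squaring to $n^{(m-3)/2}\gg\beta^2\sqrt{(n,N)}\,(N^{m/2-1}\det Q/F(Q,\tfrac{m-1}{2}-\tfrac{1}{m})+(\det Q)^{1-2/m})$, which is exactly the stated inequality after extracting the power $2/(m-3)$.

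The bounds on $\beta$ follow directly from Lemma \ref{lemma:lowerbound} and the remarks immediately after it. For $m=4$ and primitively locally represented $n$, that lemma supplies $r_p(q,n)\gg(1-1/p)/\sqrt{(p^\nu,n)}$ with $\nu\le v_p(N)$, so $\prod_p\beta_p(q,n)\gg(n,N)^{-1/2}(nN)^{-\eps}$ and one may take $\beta=\sqrt{(n,N)}$. For $m\ge 5$, every quadratic form in five or more variables is isotropic over each $\Z_p$; after sorting $\nu_1\le\cdots\le\nu_m$, there is always a primitive local solution with $\nu(x)\le\nu_5\le v_p(\det Q)/(m-4)$, and Lemma \ref{lemma:lowerbound} then gives $\prod_p\beta_p\gg\prod_{p\mid N}(p^{v_p(\det Q)/(m-4)},p^{v_p(N)},n)^{-1}(nN)^{-\eps}$, which implies $\beta\le\min((n,N),(n,(\det Q)^{1/(m-4)}))$.

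The main obstacle I anticipate is bookkeeping rather than any genuine mathematical difficulty: correctly identifying the shape of $\|f\|$ arising from Theorem \ref{thm:main} in each case—in particular isolating the secondary $(\det Q)^{1/2-1/m}$ contribution for $m\ge 5$—tracking which branch of the $\min$ is active, and verifying that the polynomial manipulations reproduce the stated exponents precisely.
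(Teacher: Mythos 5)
Your overall strategy coincides with the paper's: feed the norm bounds of Theorem \ref{thm:main} into the Petersson-formula estimate \eqref{eq:petersson}, compare the resulting error with the genus main term \eqref{eq:rgen}, and read off the bounds on $\beta$ from Lemma \ref{lemma:lowerbound} and the discussion following it; the polynomial manipulations producing the three sufficient conditions are correct. The genuine gap is your justification of the $\sqrt{N}$ branch of the $\min$. You claim that $|a(n)|\ll\norm{f}\,\sqrt{N}\,n^{m/4-1/2}(nN)^\eps$ is a trivial consequence of the Fourier expansion at infinity and the volume $\asymp N^{1+\eps}$ of a fundamental domain. It is not: unfolding $\int_0^1|f(x+iy)|^2\,dx=\sum_n|a(n)|^2e^{-4\pi ny}$ against the fundamental domain and counting covering multiplicities yields only the mean-value bound $\sum_{n\le X}|a(n)|^2\ll\norm{f}^2X^{m/2-1}(1+X/N)(NX)^\eps$, hence individually $|a(n)|\ll\norm{f}\,n^{m/4-1/2}(1+\sqrt{n/N})(nN)^\eps$, which is never sharper than \eqref{eq:petersson}. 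In the regime $n(n,N)\gg N^4$, which is precisely where the $\sqrt{N}$ branch is the operative one, this trivial bound is far weaker than $\sqrt{N}$. The factor $\sqrt{N}$ is a Ramanujan-quality statement for the individual coefficients, uniform in $n$, and the paper obtains it (for even $m$ only) by decomposing $f$ over a basis of newforms whose Petersson products are controlled as in \cite[Theorem 11]{RS2018} and then invoking Deligne's bound for the Hecke eigenvalues. Without this deep input, the second sufficient condition for $m=4$, namely $n\gg\beta^2N^2\bigl(N\det Q/F(Q,2)+(\det Q)^{3/4}\bigr)$, is unproved.

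A secondary, smaller issue: for $m\ge5$ Theorem \ref{thm:main} as stated gives only $\norm{f}^2\ll N^{m/2+\eps}/F\bigl(Q,\tfrac{m-1}{2}-\tfrac{1}{m}\bigr)$, so the ``secondary $(\det Q)^{1/2-1/m}$ contribution'' you posit cannot simply be cited; to account for the $(\det Q)^{1-2/m}$ term in the stated condition you would need to extract the relevant intermediate estimate from the proof of Theorem \ref{thm:main} (the last sum in \eqref{eq:est0}) rather than from its statement. This is bookkeeping, as you anticipate, but as written the provenance of that term is not established.
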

\begin{proof} We apply \eqref{eq:petersson} to bound the Fourier coefficients of $f(z) =\theta(Q,z)- \theta(\operatorname{gen}Q,z)$. For even $m$ this is a result from \cite[Corollary 14.24]{Iw2004}. Since the Weil bound also holds for Kloosterman sums twisted by a quadratic character, cf.\ \cite[Lemma 4]{Wa2017}, this result can be extended to odd $m$. In addition, we have for even $m$ that
\begin{align*}
a(n) \ll \norm{f} n^{\frac{m}{4}-\frac{1}{2}} N^\frac{1}{2} (nN)^\eps
\end{align*}
by applying Deligne's bound, cf.\ \cite[Theorem 11]{RS2018}.
\end{proof}

For $m=4$, this result  implies a substantial saving compared to previous lower bounds such as \cite[Theorem 1]{Ro2019}. For example if $(n,N) =1$ and $\det Q  = F(Q,2)$, a lower bound is given by $n \gg (N^2 + (\det Q)^{4/3})N^\eps$. For $m=6$, we obtain 
\begin{align*}
n \gg (n^2,\det Q)^\frac{2}{3} (n,N)^\frac{1}{3} \Big( N^\frac{2}{3} (\det Q)^\frac{2}{3} + \det Q\Big) N^\eps
\end{align*}
which improves the lower bound $n \gg (\det Q)^{\frac{12}{5}}$ of Hsia and Icaza \cite{HI1999}. 

\section{Applications}

We start with the problem of representing an integer by three squares of almost primes. To this end, we want to sieve the sequence
\begin{align*}
\mathcal{A} = \{x_1 x_2 x_3 \mid   x_1^2 +x_2^2 + x_3^2 = n, x_i \in \Z_{>0}.\},
\end{align*}
where $n \equiv 3\,(\operatorname{mod}24)$, $5 \nmid n$. Let $d$ denote a squarefree integer and $\mathcal{A}_d$  the subset of all $a \in \mathcal{A}$ that are divisible by $d$. Moreover, let $\textbf{l} = (l_1,l_2,l_3)$, $\mu(\textbf{l}) = \mu(l_1) \mu(l_2) \mu(l_3)$ and  $q_\textbf{l}(x) = l_1^2 x_1^2 + l_2^2 x_2^2 + l_3^2 x_3^2$ with corresponding matrix $Q_\textbf{l}$. An application of the inclusion-exclusion principle gives
\begin{align*}
|\mathcal{A}_d| = \mu(d) \sum_{\substack{\textbf{l} \in \N^3 \\ [l_1,l_2,l_3] =d }} \mu(\textbf{l}) r(Q_\textbf{l},n).
\end{align*} 
We decompose $r(Q_\textbf{l},n) = r(\operatorname{gen}Q_\textbf{l},n) + a(n)$. For the main term, we have \[r(\operatorname{gen}Q_\textbf{l},n) = \frac{\pi n^\frac{1}{2}}{4 l_1 l_2 l_3} \prod_p \beta_p(Q_\textbf{l},n).\] 
For $\textbf{1}=(1,1,1)$, we have $\beta_p(Q_\textbf{1},n) \gg n^{-\eps}$ for $n \equiv 3\, (\operatorname{mod} 8)$. Hence, we may set
\[
\omega(\textbf{l},n) := \prod_p \beta_p(Q_\textbf{l},n)  \Big( \prod_p\beta_p(Q_\textbf{1},n)\Big)^{-1}
\] 
and 
\[ \frac{\Omega(d)}{d} = \prod_{p \mid d} \frac{\Omega(p)}{p} := \mu(d) \sum_{\substack{l \in \N^3, [l_1,l_2,l_3]=d}} \mu(l_1) \mu(l_2) \mu(l_3) \frac{\omega(\textbf{l},n)}{l_1 l_2 l_3}.
\]
The functions $\omega(\textbf{l},n)$ are explicitly computed in \cite[Lemma 3.1 \& 3.2]{BB2005}. This yields for $n \equiv 3 \,(\operatorname{mod} 24)$, $5 \nmid n$  that 
$0 \leq \Omega(p) <p$. Furthermore, there is constant $A \ge 2$ such that 
\begin{align*}
\prod_{z_1 \le p < z} \Big(1- \frac{\Omega(p)}{p} \Big)^{-1} \le \Big(\frac{\log z}{\log z_1}\Big)^3 \Big(1+ \frac{A}{{\log z_1}}\Big) \text{ for } 2 \le z_1 \le z.
\end{align*}
For $X := \frac{\pi}{4} n^\frac{1}{2} \prod_p \beta_p(Q_\textbf{1},n)$, we obtain that $|\mathcal{A}_d| = \frac{\Omega(d)}{d} X + R_d(\mathcal{A})$, where 
\begin{align*}
R_d(\mathcal{A}) = \mu(d)\sum_{\substack{l \in \N^3, [l_1,l_2,l_3]=d}} \mu(\textbf{l}) (r(Q_\textbf{l},n) - r(\operatorname{gen}Q_\textbf{l},n)).
\end{align*} 
Let $F_3(s)$ and $f_3(s)$ denote the three dimensional sieve functions given in \cite[Theorem 0]{DHR1997}. Then, it holds by \cite[Theorem 1]{DHR1997} for $\kappa =3$ that: 
\begin{lemma} Let $\mathcal{A}, X, \Omega$ as above and write $v(d)$ for the number of prime factors of $d$. Furthermore, let
\begin{align*}
\sum_{d \leq X^\tau} \mu^2(d) 4^{v(d)} |R_d(\mathcal{A})| \ll \frac{X}{(\log X)^4}	
\end{align*}
for $\tau \in \R$ with $0< \tau <1$. Then, it holds for any $u,v \in \R$ with $1/\tau <u <v$ and $\tau v > \beta_3 := 6.6408$ that 
\begin{align*}
|\{P_r \mid P_r \in \mathcal{A}\}| \gg X \prod_{p < X^\frac{1}{v}} \Big(1 - \frac{\Omega(p)}{p}\Big)
\end{align*}
provided only that 
\begin{align} \label{eq:hard}
r > 3 u -1 + \frac{3}{f_3(\tau v)} \int_{1}^{v/u} F_3(\tau v-s) \Big(1- \frac{u}{v}s \Big) \frac{ds}{s}.
\end{align}
\end{lemma}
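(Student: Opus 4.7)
The plan is to apply the weighted combinatorial sieve of dimension $\kappa = 3$ of Diamond, Halberstam and Richert, in the form given in \cite[Theorem 1]{DHR1997}, essentially as a black box, and to verify its input hypotheses in turn.

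First I confirm that $(\mathcal{A},\Omega,X)$ is a sieve problem of dimension $\kappa=3$ in the DHR sense. The conditions $0 \leq \Omega(p) < p$ and
\[
\prod_{z_1 \le p < z} \Big(1- \frac{\Omega(p)}{p} \Big)^{-1} \le \Big(\frac{\log z}{\log z_1}\Big)^3 \Big(1+ \frac{A}{\log z_1}\Big),
\]
both established in the paragraphs preceding the lemma under $n \equiv 3 \,(\operatorname{mod} 24)$, $5 \nmid n$, are exactly the sieve axioms $(\Omega_1)$ and $(\Omega_2(\kappa))$ of DHR with $\kappa = 3$, and the decomposition $|\mathcal{A}_d| = \frac{\Omega(d)}{d} X + R_d(\mathcal{A})$ is already in the required form. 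Next I verify the level of distribution: the DHR combinatorial sieve weights $\lambda^{\pm}(d)$ are supported on squarefree $d \leq X^\tau$ and satisfy $|\lambda^{\pm}(d)| \leq 3^{v(d)}$ pointwise, so that the total sieve remainder is bounded by
\[
\sum_{d \leq X^\tau} \mu^2(d)\, 3^{v(d)} |R_d(\mathcal{A})| \leq \sum_{d \leq X^\tau} \mu^2(d)\, 4^{v(d)} |R_d(\mathcal{A})| \ll \frac{X}{(\log X)^4},
\]
which is exactly the hypothesis of the lemma; the saving $(\log X)^{-4}$ comfortably beats the logarithmic main-term loss in dimension three.

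With these axioms and the level-of-distribution bound in hand, I invoke \cite[Theorem 1]{DHR1997} with $\kappa = 3$ and the parameters $(\tau,u,v)$ as given. The constraint $\tau v > \beta_3 = 6.6408$ is precisely the DHR sifting limit in dimension three, $1/\tau < u < v$ fixes the admissible range for the Richert-type weighted sum, and the threshold
\[
3u - 1 + \frac{3}{f_3(\tau v)} \int_1^{v/u} F_3(\tau v - s)\Big(1 - \frac{u}{v} s\Big) \frac{ds}{s}
\]
on $r$ is exactly the cut-off for the number of prime factors appearing in the conclusion of that theorem. Once $r$ exceeds this value, the weighted lower bound produced by the sieve stays strictly positive, which forces at least $\gg X \prod_{p < X^{1/v}} (1 - \Omega(p)/p)$ elements of $\mathcal{A}$ to be $P_r$, as claimed.

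The main obstacle here is therefore not analytic but organisational: one must match the notation of the present setting to the abstract formulation of DHR and identify the sieve constants $F_3$, $f_3$, $\beta_3$ with theirs. All the genuine analytic input, namely producing a remainder estimate of the required strength for a non-trivial level exponent $\tau$, is postponed to the subsequent application of the lemma, where Lemma \ref{lemma:m=3} and Theorem \ref{thm:main} will be brought to bear.
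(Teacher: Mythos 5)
Your proposal matches the paper exactly: the paper gives no independent argument for this lemma but simply states it as an instance of \cite[Theorem 1]{DHR1997} with $\kappa=3$, after having verified the sieve axioms $0\le\Omega(p)<p$ and the $\kappa=3$ density condition in the preceding paragraphs, which is precisely what you do. The only genuinely new content in your write-up is the routine bookkeeping identifying the remainder hypothesis with the DHR level-of-distribution condition, and that is correct.
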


We prove the following improvement of \cite[Lemma 4.1]{L2007}: 
 
\begin{lemma} Let $n \equiv 3\, (\operatorname{mod} 24), 5 \nmid n$ and $\tau < \frac{3}{58}$. Then, 
\begin{align*}
\sum_{d \leq n^\frac{\tau}{2}}  \mu^2(d) 4^{v(d)} |R_d(\mathcal{A})| \ll n^{\frac{1}{2}-\eps}.
\end{align*}
\end{lemma}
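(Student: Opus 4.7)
The plan is to decompose the error as
\[
r(Q_\mathbf{l}, n) - r(\operatorname{gen}Q_\mathbf{l}, n) = \bigl(r(Q_\mathbf{l}, n) - r(\operatorname{spn}Q_\mathbf{l}, n)\bigr) + \bigl(r(\operatorname{spn}Q_\mathbf{l}, n) - r(\operatorname{gen}Q_\mathbf{l}, n)\bigr),
\]
apply Lemma \ref{lemma:m=3} to the first summand, and dispose of the second by a local vanishing argument. For squarefree $d \le n^{\tau/2}$ and squarefree $l_i$ with $[l_1,l_2,l_3]=d$, the form $Q_\mathbf{l}$ has level $N=4d^2$ and determinant $\det Q_\mathbf{l} = 8(l_1 l_2 l_3)^2$; since $l_1 l_2 l_3 \ge [l_1,l_2,l_3]=d$, one has $(\det Q_\mathbf{l})^{1/6}\gg d^{1/3}$. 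The number of admissible triples $\mathbf{l}$ and the weight $4^{\nu(d)}$ together contribute only a factor $n^\eps$, so after fixing the worst $\mathbf{l}$ it suffices to sum over $d$.

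The spinor-to-genus difference is supported on Fourier coefficients of unary theta series of the form $\sum \psi(k) k\, e(tk^2 z)$ with $4t\mid 4N$. Since each squarefree $l_i$ satisfies $v_p(l_i)\in\{0,1\}$, for odd $p$ the $p$-adic exponents of $Q_\mathbf{l}$ always have two equal members, so criterion \eqref{eq:gen=spn} applies and gives $r(\operatorname{spn}Q_\mathbf{l}, n)=r(\operatorname{gen}Q_\mathbf{l}, n)$; any residual terms arising from the prime $2$ can be disposed of using the residue restrictions $n\equiv 3\,(\operatorname{mod}24)$, $5\nmid n$ exactly as in \cite{BB2005}. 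Applying Lemma \ref{lemma:m=3} to the cuspidal part with $\sqrt{N}=2d$ and $(\det Q_\mathbf{l})^{1/6}\gg d^{1/3}$ then produces three contributions of respective sizes
\[
d^{8/21}\,n^{13/28+\eps}, \qquad d^{13/24}\,n^{7/16+\eps}, \qquad d^{-1/3}\,n^{1/4+\eps}\sqrt{(\tilde n,N^\infty)(n,N)}\,v^{1/4}.
\]
Summing the first over $d\le n^{\tau/2}$ gives $n^{29\tau/42 + 13/28 + \eps}$, which is $\le n^{1/2-\eps}$ exactly when $\tau\le 3/58$, reproducing the stated range; the second contribution gives the weaker constraint $\tau\le 3/37$.

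The main obstacle will be the third contribution, where the arithmetic factors must be averaged carefully. Since $d$ is squarefree and $n$ is odd, $(\tilde n, N^\infty)$ divides $\operatorname{rad}(d)\le d$ while $(n,N)=(n,d^2)\le(n,d)^2$; rearranging $\sum_{d\le D} d^{-1/3}(n,d)$ as $\sum_{d_0\mid n} d_0 \sum_{d\le D,\, d_0\mid d} d^{-1/3}$ produces $O(D^{2/3} n^\eps)$ after trivial estimates on the inner sum. The anisotropy factor admits the crude bound $v\le N=4d^2$, adding a further $d^{1/2}$, and the resulting exponent of $n$ stays strictly below $1/2$ for any $\tau<3/16$, which is comfortably covered by the hypothesis. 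Hence the binding constraint is $\tau<3/58$ coming from the leading term of Lemma \ref{lemma:m=3}, and the desired bound follows.
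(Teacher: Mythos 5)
Your argument is correct and follows essentially the same route as the paper: reduce to $r(Q_{\textbf{l}},n)-r(\operatorname{spn}Q_{\textbf{l}},n)$ via the criterion \eqref{eq:gen=spn} (the surviving forms have all $l_i$ odd and squarefree, so the exponent conditions hold at every prime), apply Lemma \ref{lemma:m=3} with $N=4d^2$ and $\det Q_{\textbf{l}}\gg d^2$, absorb the number of triples and the weight $4^{v(d)}$ into $n^\eps$, and sum over $d$, with the binding constraint $\tau<3/58$ coming from the $n^{13/28}$ term exactly as in the paper. The only cosmetic deviation is your crude bound $v\le N$ for the anisotropy factor, where the paper observes that the relevant forms are isotropic at all odd primes so $v\ll 1$; since that third term is not the binding one, both treatments suffice.
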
 
\begin{proof}
We only consider combinations of  $l_1,l_2,l_3$ such that either $r(Q_\textbf{l},n) \neq 0$ or  $r(\operatorname{gen}Q_\textbf{l},n) \neq 0$. The remaining forms are isotropic over all odd primes and satisfy $2 \nmid l_1l_2l_3$. It follows by \eqref{eq:gen=spn} that $r(\operatorname{gen}Q_\textbf{l},n) = r(\operatorname{spn}Q_\textbf{l},n)$. Since the level of $Q_\textbf{l}$ is given by $4d$ we infer by Lemma \ref{lemma:m=3} that
\begin{align*}
&\sum_{d \leq n^{\tau/2}} \mu^2(d) \sum_{ [l_1,l_2,l_3]=d} \mu^2(\textbf{l}) 4^{v(d)} |r(Q_\textbf{l},n) - r(\operatorname{gen}Q_\textbf{l},n)| \\ &\ll \sum_{d \leq n^{\tau/2}}  d^{\frac{2}{3}} \Big( \frac{n^{\frac{13}{28}}}{d^{\frac{2}{7}}} + \frac{n^{\frac{7}{16}}}{d^{\frac{1}{8}}} + n^{\frac{1}{4}} \frac{\sqrt{(n,d)(n,d^2)}}{d} \Big) d^\eps,
\end{align*}
where we estimated $\#\{ \textbf{l} \in \N^3 \mid [l_1,l_2,l_3] =d\} \le \tau(d)^3 \ll d^\eps$ in the second step.  For $\tau <\frac{3}{58}$ the display above is bounded by $\ll n^{\frac{1}{2}-\eps}$. 
\end{proof}

To avoid the computation of $F_3(s)$ and $f_3(s)$, we apply a well known trick. If we set $\tau u = 1 + \zeta - \zeta/ \beta_3$ and $\tau v = \beta_3 / \zeta + \beta_k -1$, it follows by (10.1.10), (10.2.4) and (10.2.7) in \cite{HR1974} that 
\begin{align*}
\frac{3}{f_k(\tau v)} \int_{1}^{v/u} F_k(\tau v-s) \Big(1- \frac{u}{v}s \Big) \frac{ds}{s} \le (3+ \zeta) \log \frac{\beta_3}{\zeta} -3 + \zeta \frac{3}{\beta_3}. 
\end{align*}
This gives the following slightly weaker version of \eqref{eq:hard}:
\begin{align*}
r > \frac 3 \tau (1+ \zeta) -1 + (3+\zeta) \log \frac{\beta_3}{\zeta} -3 - \zeta \frac{3 ( \frac 1 \tau -1)}{\beta_3} := m(\zeta).
\end{align*}
For $\tau = 3/58$ it follows that 
\begin{align*}
r > \min_{0 < \zeta < \beta_3} m(\zeta) = m(0.0560831\ldots) \approx 71.3875. 
\end{align*}
This proves Corollary \ref{cor:m=3}. 

Even for four squares of primes, current technology is not sufficient to prove that every  $n \equiv 4\,(\operatorname{mod}24)$ is represented. By means of sieving and Theorem \ref{thm:main}, we can address this problem for almost primes. Here, the main input is: 
 
\begin{lemma} Let $n \equiv 4\, (\operatorname{mod} 24)$, $\tau < \frac{1}{2}$, $q_\textbf{l}(x) = l_1^2 x_1^2 + l_2^2x_2^2+l_3^2 x_3^2 + x_4^2 l_4^2$. Then, 
	\begin{align*}
	\sum_{d \leq n^\frac{\tau}{2}} \mu(d) \sum_{\substack{ l \in \N^4 \\ d=[l_1,l_2,l_3,l_4]}} \mu(\textbf{l}) 4^{v(d)} |r(Q_\textbf{l},n) - r(\operatorname{gen}Q_\textbf{l},d)| \ll n^{\frac{1}{2}-\eps}.
	\end{align*}
\end{lemma}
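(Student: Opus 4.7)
The proof follows the same framework as the ternary lemma above, with Lemma \ref{lemma:m=4} replacing Lemma \ref{lemma:m=3}. I would first restrict the outer sum to tuples $\textbf{l}$ for which at least one of $r(Q_\textbf{l},n)$, $r(\operatorname{gen}Q_\textbf{l},n)$ is nonzero. The congruence $n \equiv 4\,(\operatorname{mod}24)$ forces $3,5\nmid n$ and $v_2(n)=2$, which creates local obstructions eliminating configurations where some odd prime divides all four of the $l_i$ (otherwise $p^2\mid n$ would be violated); in particular one may assume $d$ is odd.

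For each admissible squarefree $d$ and each $\textbf{l}$ with $[l_1,l_2,l_3,l_4]=d$, the diagonal form $Q_\textbf{l}$ has level $N=4d^2$, determinant $\det Q_\textbf{l}=16(l_1l_2l_3l_4)^2$, and
\[F(Q_\textbf{l},2)\asymp\prod_{p\mid d}p^{2\min(k_p,2)},\qquad k_p:=\#\{i:p\mid l_i\},\]
computed from \eqref{eq:F(Q,s)}. The number of tuples $\textbf{l}$ producing a fixed $d$ is $(2^4-1)^{\omega(d)}\ll d^\eps$. Plugging these quantities and $n^{m/4-1/2}=n^{1/2}$ into Lemma \ref{lemma:m=4}, the resulting pointwise bound contains the factor
\[\left(\tfrac{N}{\sqrt{F(Q_\textbf{l},2)}}+\tfrac{\sqrt{N}}{(\det Q_\textbf{l})^{1/8}}\right)\min\!\left(\sqrt{N},1+\tfrac{n^{1/4}(n,N)^{1/4}}{\sqrt{N}}\right).\]
One splits the $d$-range at $d\asymp n^{1/8}$: for small $d$ the Deligne branch $\sqrt{N}\asymp d$ is smaller, while for larger $d$ the Petersson branch $\asymp n^{1/4}(n,N)^{1/4}/d$ is more efficient.

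Summing over $d\le n^{\tau/2}$ with the weight $4^{\omega(d)}\ll d^\eps$ and the count $d^\eps$ of tuples, the hypothesis $\tau<\tfrac{1}{2}$ is precisely what pushes the resulting exponent below $\tfrac{1}{2}$, paralleling the role that $\tau<\tfrac{3}{58}$ played in the ternary case. The main technical obstacle is the dependence on $(n,N)=(n,4d^2)$: when $d$ shares many prime factors with $n/4$ the Petersson branch inflates, so one performs a further dyadic decomposition over $g=(n/4,d^2)$ and applies standard divisor-function estimates to the inner sum $\sum_{d:\,g\mid d^2}d^{-\alpha}$, using that $n\equiv 4\,(\operatorname{mod}24)$ excludes the smallest primes from $(n,d)$ and hence keeps the average size of $(n,N)^{1/4}$ at the level $n^\eps$.
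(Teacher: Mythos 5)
The paper states this lemma without proof, so there is no argument of the author's to compare against line by line; your framework --- restrict to admissible $\textbf{l}$, compute $N=4d^2$, $\det Q_\textbf{l}=16(l_1l_2l_3l_4)^2$ and $F(Q_\textbf{l},2)$ from the local splittings, apply Lemma \ref{lemma:m=4}, count the $\ll d^{\eps}$ tuples per $d$, and sum over $d$ --- is certainly the intended route, and your identification of $F(Q_\textbf{l},2)\asymp\prod_{p\mid d}p^{2\min(k_p,2)}$ and of the crossover $d\asymp n^{1/8}$ between the two branches of the $\min$ is correct.

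The genuine gap is in the final step, where you assert that ``$\tau<\tfrac12$ is precisely what pushes the resulting exponent below $\tfrac12$.'' That is impossible: every term you are summing carries the factor $n^{m/4-1/2}=n^{1/2}$ multiplied by quantities that are $\geq 1$, so no choice of $\tau$ can make the total $\ll n^{1/2-\eps}$; indeed, by Rankin--Selberg the cuspidal coefficients of the weight-$2$ forms $\theta(Q_\textbf{l},\cdot)-\theta(\operatorname{gen}Q_\textbf{l},\cdot)$ are genuinely of size $n^{1/2}$ on average over $n$, so the displayed inequality is false as printed. Carrying the computation through: the worst value of the first factor is $N/\sqrt{F(Q_\textbf{l},2)}\ll d$ (attained when each $p\mid d$ divides exactly one $l_i$, in which case it dominates $\sqrt{N}/(\det Q_\textbf{l})^{1/8}\ll d^{3/4}$); the Petersson branch then gives a per-term bound $\ll n^{1/2}d+n^{3/4}(n,4d^2)^{1/4}$ and hence a total $\ll n^{1/2+\tau}+n^{3/4+\tau/2+\eps}$, while the Deligne branch for $d\le n^{1/8}$ contributes $\ll n^{7/8}$. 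The binding constraint is $3/4+\tau/2<1$, i.e.\ exactly $\tau<\tfrac12$, but the conclusion it buys is $\ll n^{1-\eps}$ --- which is the correct normalization here, since the quaternary main term is $X\asymp n$ rather than $n^{1/2}$. The exponent $\tfrac12-\eps$ in the statement is evidently carried over from the ternary lemma; a correct write-up must either derive $n^{1-\eps}$ and amend the statement accordingly, or explain where an extra saving of $n^{1/2}$ would come from (it does not exist). A minor additional slip: $n\equiv 4\,(\operatorname{mod}24)$ does not force $5\nmid n$ (e.g.\ $n=100$), though this plays no role in the estimate.
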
 

By applying the  weighted four dimensional sieve from \cite{DHR1997} it follows that every $n\equiv 4\, (\operatorname{mod} 24)$ is represented by $x_1^2 + x_2^2 + x_3^2 + x_4^2$ with $x_1 x_2 x_3 x_4 \in P_{20}$. However, a recent approach by Tsang, Zhao \cite{Tsang2017} and Ching \cite{Ching2018} yields much better results. Their idea is to choose one of the $x_i$'s as a prime and then to combine Chen's switching result with a three dimensional sieve. This shows that $n = p^2 + x_1^2 + x_2^2 + x_3^2$ for a prime $p$ and $x_1x_2x_3 \in P_{12}$. 

To tackle the problem of representing an integer by three squares of smooth numbers, we follow the approach in \cite{BBD2009}. The underlying idea is to choose integers $d_1,d_2,d_3 \in [n^{\eta}, 2 n^{\eta}]$ with $\eta$ as large as possible such that 
\begin{align*}
q(x) = d_1^2 x_1^2 + d_2^2 x_2^2 + d_3^2 x_3^2 = n
\end{align*}
is soluble. To obtain a lower bound for $r(\operatorname{spn}Q,n)$, we choose $d_1,d_2,d_3$ to be distinct primes  that are  $\equiv 1 \,(\operatorname{mod} 4)$ and coprime to $n$. This implies that
\begin{align} \label{eq:upper}
r(\operatorname{spn}Q,n) = r(\operatorname{gen}Q,n)  \gg \frac{n^{\frac{1}{2}}}{{d_1 d_2 d_3}} (nN)^{-\eps} \gg n^{\frac 1 2-3\eta - \eps}.
\end{align}
Furthermore, we obtain by Lemma \ref{lemma:m=3} that 
\begin{align} \label{eq:lower}
| r(Q,n) - r(\operatorname{spn}Q,n)| &\ll (d_1d_2d_3)^\frac{2}{3} \Big(\frac{n^{\frac{13}{28}}}{(d_1d_2d_3)^{2/7}} + \frac{n^{\frac{7}{16}}}{(d_1d_2d_3)^{1/8}} \Big) n^\eps \\ \notag &\ll \Big(n^{\frac{13}{28}+ \frac{8}{7} \eta }+ n^{\frac{7}{16}+ \frac{13}{8} \eta }\Big) n^{ \eps}. 
\end{align}  
Hence, for $\eta = \frac{1}{116}-\eps$, equation \eqref{eq:lower} is dominated by \eqref{eq:upper} and it follows that $r(Q,n) \geq 1$ for $n$ sufficiently large. This proves the first claim of Corollary \ref{cor:smooth}. 

To derive results for sums of four squares, we make use of the distribution of smooth numbers in short intervals. For every $n \in \N$ it is possible to choose $x$  with largest prime factor not exceeding $n^\frac{1}{4}$ such that $0 \leq n -x^2 \leq n^\frac{5}{8}$ and ${n -x^2 \not\equiv 0,1,4,7\,(\operatorname{mod} 8)}$, see \cite[\S\,5]{BBD2009} for details. By our previous result for three squares, this implies that every  sufficiently large $n$ is represented by the sum of four squares whose largest prime divisors do not exceed $n^\frac{\theta}{2}$ where $\theta = \frac{285}{464}$.

The bounds for three smooth squares can be improved, if we choose $d_1 = e_1 e_2$, ${d_2 = e_1 e_3}$ and $d_3 = e_2 e_3$ for $e_1,e_2,e_3$ mutually coprime and $(e_1e_2e_3,n)=1$. However, this only works, if $\big(\frac{p}{n} \big) =1$ for every prime divisor $p$ of $e_1 e_2 e_3$, since then $\beta_p(n,Q) = 1 + \big(\frac{p}{n} \big) =2$. If this latter condition is satisfied, we can choose $\eta = \frac{1}{80}- \eps$.


\bibliographystyle{amsplain}
\bibliography{mybibfabri44018}
\end{document}